\documentclass[10pt]{amsart}

\setlength{\textwidth}{400pt}
\setlength{\marginparwidth}{0pt}
\setlength{\oddsidemargin}{30pt}
\setlength{\evensidemargin}{30pt}

\usepackage[french, english]{babel}
\usepackage[T1]{fontenc}
\usepackage{amsmath,amssymb,stmaryrd}
\usepackage[all]{xy}
\usepackage{graphicx}
\usepackage{fancyhdr}

\parindent=0cm

 \theoremstyle{plain}
 \newtheorem{lemma}{Lemma}[section]

\newtheorem{theorem}[lemma]
{Theorem}
\newtheorem{corollary}[lemma]
{Corollary}

\newtheorem{prop}[lemma]{Proposition}

\theoremstyle{definition}
\newtheorem{definition}[lemma]{Definition }

\newtheorem{rmk}[lemma]{Remark}

\newcommand{\lgw}{\longrightarrow}

\newcommand{\s}{\sigma}

\newcommand{\md}{\text{mod}}

\newcommand{\wdh}{\widehat}

\renewcommand{\L}{\mathbb{L}}
\newcommand{\m}{\mathfrak{m}}

\newcommand{\Z}{\mathbb{Z}}

\renewcommand{\k}{\Bbbk}
\renewcommand{\dim}{\text{dim}}
\newcommand{\td}{\text{tr.deg}}

\newcommand{\K}{\mathbb{K}}

\newcommand{\gr}{\text{gr}}
\newcommand{\N}{\mathbb{N}}

\newcommand{\h}{\Phi}

\newcommand{\ch}{\text{char}}
\newcommand{\he}{\text{ht}}
\newcommand{\Q}{\mathbb{Q}}

\newcommand{\rr}{\text{rat.rk}}
\newcommand{\p}{\textsl{p}}
\newcommand{\q}{\textsl{q}}

\newcommand{\ddo}{,\cdots,}

\newcommand{\g}{\Gamma}

\newcommand{\lt}{\text{length}}
\renewcommand{\phi}{\varphi}

\begin{document}

\title{ The analogue of Izumi's Theorem for  Abhyankar valuations}
\author{by G. Rond and M. Spivakovsky}

\address{Institut de Math\'ematiques de Marseille, 
Universit\'e d'Aix-Marseille, Campus de Luminy, Case 907,
13288 Marseille Cedex 9}
\email{guillaume.rond@univ-amu.fr}
\address {Institut de Math\'ematiques de Toulouse
Universit\'e Paul Sabatier,
118 route de Narbonne,
31062 Toulouse Cedex 9 ,
France}
\email{mark.spivakovsky@math.univ-toulouse.fr}

\dedicatory{Dedicated to the memory of Shreeram Abhyankar and David Rees.}

\begin{abstract}
A well known theorem of Shuzo Izumi, strengthened by David Rees, asserts that all the divisorial valuations centered in an analytically irreducible local noetherian ring $(R,\m)$ are linearly comparable to each other. This is equivalent to saying that any divisorial valuation $\nu$ centered in $R$ is linearly comparable to the $\m$-adic order. In the present paper we generalize this theorem to the case of Abhyankar valuations $\nu$ with archimedian value semigroup $\Phi$. Indeed, we prove that in a certain sense linear equivalence of topologies characterizes Abhyankar valuations with archimedian semigroups, centered in analytically irreducible local noetherian rings. In other words, saying that $R$ is analytically irreducible, $\nu$ is Abhyankar and $\Phi$ is archimedian is equivalent to linear equivalence of topologies plus another condition called weak noetherianity of the graded algebra $\gr_\nu R$.

We give some applications of Izumi's theorem and of Lemma \ref{lemma_norm}, which is a crucial step in our proof of the main theorem. We show that some of the classical results on equivalence of topologies in noetherian rings can be strengthened to include linear equivalence of topologies. We also prove a new comparison result between the $\m$-adic topology and the topology defined by the symbolic powers of an arbitrary ideal. 
\end{abstract}

\subjclass[2000]{Primary: 13A18. Secondary: 13A15, 16W60}

\maketitle

\section{Introduction}

Let ($R$, $\m$, $\k $) be a local noetherian domain with the
maximal ideal $\m$ and residue field $\k $. Let $\K$ denote the field of fractions of $R$.
Consider a valuation $\nu: \K^*\twoheadrightarrow \g $  of $\K$ with value group
$\g$. We denote by $R_{\nu}$ its valuation ring and by $\m_{\nu}$ its maximal ideal.
\begin{definition} We say that $\nu$ is centered in $R$ if $ \nu$ is non-negative on $R$  and strictly
positive on $\m$.
\end{definition}
Consider a valuation $\nu :R\longrightarrow \g$, centered in $R$.  Then $\m_{\nu}\cap R=\m$; 
thus  $\k$ is a subfield of $\k_{\nu}:= \frac{R_{\nu}}{\m_{\nu}}$.
\begin{definition} We say that $\nu$ is a {\it divisorial valuation} if its value group $\g=\Z$ and
$\td_{\k}\k_{\nu}=\dim\ R-1$.
\end{definition}
The purpose of this paper is to generalize the following theorem of Shuzo Izumi and David Rees (often called Izumi's Theorem for short) to a larger class of valuations than just the divisorial ones.
\begin{theorem}\label{Izumi}\cite{8}, \cite{Re3}
Let $R$  be an analytically irreducible local domain. Then for any two divisorial valuations $\nu$ and $\nu'$, centered in $R$, there exists a constant $k>0$ such that
$$\nu(f)\leq  k\,\nu'(f)\ \ \ \ \forall f\in R.$$
\end{theorem}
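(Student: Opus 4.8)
\emph{Sketch of the approach I would take.} The plan is to proceed in three stages: formal reductions, a geometric realization of $\nu$, and one genuinely substantial estimate. \textbf{Reductions.} First pass to $\wdh R$: since $R$ is analytically irreducible, $\wdh R$ is a complete local domain, and, $\nu$ and $\nu'$ being divisorial, they extend to divisorial valuations of $\Frac(\wdh R)$ centered in $\wdh R$ with the same values on $R$ (realize $\nu$ on a normal, local, birational, essentially finite type model of $R$ and base change to $\wdh R$, which stays a domain). Then pass to the normalization $\ovl R$ of $\wdh R$: it is module-finite over $\wdh R$ (which is complete and reduced) and local (analytic irreducibility gives unibranchness), hence a complete local normal domain; since $\ovl R\subseteq R_\nu\cap R_{\nu'}$, both valuations are centered in $\ovl R$, and any inequality valid on $\ovl R$ holds a fortiori on $R$. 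So we may assume $R$ is a complete local normal domain of dimension $d$. Next, it suffices to bound each divisorial valuation against the $\m$-adic order $\ord_\m$: with $\gamma:=\min\{\nu'(x):x\in\m\setminus\{0\}\}\in\Z_{>0}$ one has $\nu'(f)\ge\gamma\,\ord_\m(f)$ (write $f$ as a sum of products of $\ord_\m(f)$ elements of $\m$), so it is enough to produce a constant $a$ with
$$\nu(f)\le a\,\ord_\m(f)\qquad\text{for all }f\in R,$$
since then $\nu(f)\le a\,\ord_\m(f)\le(a/\gamma)\,\nu'(f)$.

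\textbf{Geometric realization.} Choose a proper birational $\pi\colon X\to\operatorname{Spec}R$ with $X$ normal dominating the normalized blow-up of $\m$, such that $\m\O_X=\O_X(-D)$ is invertible and $\pi$-ample, $D=\sum_i a_iE_i$ is effective with every $a_i\ge 1$ and support the whole exceptional fibre, and $\nu=\ord_{E_1}$ for one component $E_1$ (take the normalized blow-up of $\m$ and, if necessary, further normalized blow-ups along the successive centres of $\nu$, arranged so that $\m\O_X$ stays invertible and $\pi$-ample). As $R$ is a complete local normal domain, $R=\Gamma(X,\O_X)$ and $\pi^{-1}(\m)$ is connected, so the dual graph of $\{E_i\}$ is connected.

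\textbf{The core estimate, which I expect to be the main obstacle.} Using that $-D$ is $\pi$-ample and that $\bigoplus_n\ovl{\m^{\,n}}$ is a finitely generated $R$-algebra — Rees's theorem, which is where analytic unramifiedness (a consequence of analytic irreducibility) enters — one first gets Rees's linear comparison $\ovl{\m^{\,a_0n}}\subseteq\m^n$, i.e.\ $\ovl v_\m\le a_0\,\ord_\m$, where $\ovl v_\m$ is the normalized minimum of the Rees valuations of $\m$; it therefore suffices to show $\nu\le a_1\,\ovl v_\m$. Since each Rees valuation of $\m$ is again of the form $\ord_F$ for an exceptional prime $F$ on $X$ (or on a model dominating it), and since for each $f$ the value $\ovl v_\m(f)$ is attained by one such $\ord_F$, this reduces to the assertion that for any two exceptional prime divisors $F_1,F_2$ on a normal proper birational model of $R$ there is a constant $c$ with $\ord_{F_1}(f)\le c\,\ord_{F_2}(f)$ for all $f\in R$. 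By transitivity of linear comparability and connectedness of the dual graph we may take $F_1\cap F_2\neq\emptyset$, and (passing to a regular model dominating the given one, using resolution where available, or Zariski's theory of complete ideals in dimension two, or an alteration) we may assume the model is regular with $F_1,F_2$ crossing transversally at a point $p$; in $\wdh{\O_{Z,p}}$ the two orders become the $s$- and $t$-adic orders for local coordinates $s,t$, and the required bound $\ord_s(f)\le c\,\ord_t(f)$ for $f\in\m$ is the two-dimensional case — equivalently, the statement that a function regular on all of $X$ cannot vanish to high order along one exceptional divisor without vanishing comparably along its neighbours. This is Rees's lemma, proved through the degree function and mixed multiplicities in \cite{Re3} (and originally by the inductive/analytic argument of \cite{8}). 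Chaining the constants along the dual graph and combining with $\ovl v_\m\le a_0\,\ord_\m$ yields $\nu(f)\le a_0a_1\,\ord_\m(f)$, and the reductions above finish the proof. In short, every step except the last is formal; the heart is upgrading the (elementary, Chevalley-type) \emph{topological} equivalence of the $\nu$-adic and $\m$-adic filtrations to \emph{linear} equivalence, which is exactly what the positivity and finite-generation input supplied by analytic irreducibility makes possible.
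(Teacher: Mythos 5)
The paper does not prove Theorem~\ref{Izumi}: it is quoted from Izumi~\cite{8} and Rees~\cite{Re3} and used as a black box (it reappears later as a tool inside the proof of Lemma~\ref{lemma_norm}). There is therefore no internal proof to compare yours against, so I evaluate the sketch on its own.

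Your reductions are fine: passing to $\wdh R$, then to the normalization, then reducing to the single inequality $\nu\le a\,\ord_\m$ via $\nu'(f)\ge\nu'(\m)\,\ord_\m(f)$, are all standard and correct; realizing $\nu$ as $\ord_{E_1}$ on a model dominating the normalized blow-up of $\m$ is the right geometric picture, as is using Rees's finite generation of $\bigoplus_n\ovl{\m^n}$ to pass from $\ord_\m$ to the reduced order. The trouble is concentrated in the ``core estimate.'' (i) You pass to a regular model with $F_1,F_2$ in normal crossing by invoking ``resolution where available, or \dots an alteration'': resolution is not known in general in positive or mixed characteristic and dimension $\ge4$, and an alteration replaces $R$ by a different ring in a way whose effect on the desired inequality you do not address; the chaining along the dual graph also presupposes an SNC exceptional fibre that you have not produced. (ii) More seriously, calling the inequality $\ord_s(f)\le c\,\ord_t(f)$ at a crossing point $p\in F_1\cap F_2$ ``the two-dimensional case'' is not a genuine reduction of dimension: $\wdh{\O_{Z,p}}$ is still $d$-dimensional, the inequality fails for arbitrary elements of that local ring (take $f=s^n$), and it holds only because $f$ is a global section of $\O_X$; the constraint is global and is not captured by completing at $p$. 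Rees's actual argument in~\cite{Re3} is carried out directly in dimension $d$ via degree functions and mixed-multiplicity inequalities, not by d\'evissage to surfaces. Finally, and by your own account, the substantive inequality is deferred entirely to \cite{8} and \cite{Re3}. As a road map the sketch is reasonable, but as a proof the heart is outsourced and the stated reduction to it is not sound.
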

This result has played a central role in the study of ideal-adic topologies and other questions about commutative rings during the last decades.

To highlight its applications, we start with some basic definitions and then recall two related theorems due to David Rees.

Let $R$  be a commutative noetherian ring and  $I$  an ideal of $R$. For an element $f\in R$
the $I$-order of $f$ is defined to be
$$
I(f):=\max\{n\in\N\mid f\in I^n\}.
$$
This function takes its values in $\N\cup\{\infty\}$ and $I(f)=\infty$ if and only if $f\in I^n$ for all $n\in\N$. Moreover it is easy to see that $I(fg)\geq I(f)+I(g)$ for all $f$, $g\in R$ since $I$ is an ideal.\\

Next we introduce a more invariant notion, defined by David Rees and Pierre Samuel, namely the
{\it reduced} order:
$$
\bar I(f):=\lim_{n\to\infty}\frac{I(f^n)}n.
$$
A priori, it is not obvious that $\bar I(f)$ is a rational number, or even
finite. The fact that $\bar I(f)$ is always rational in a noetherian domain
$R$  is a consequence of the following theorem of D. Rees:
\begin{theorem} \label{Rees_val}\cite{Re0}\cite{Re1} For any ideal $I$ in a noetherian domain $R$  there exists a
unique finite set of valuations $\{\nu_i\}_{1\leq  i\leq  r}$ of $R$  (with values in
$\Z$) such that
$$
\bar I(f)=\min_{1\leq  i\leq  r}\frac{\nu_i(f)}{\nu_i(I)}
$$
and this representation is irredundant. These valuations $\nu_i$ are called the Rees valuations of the ideal $I$.
\end{theorem}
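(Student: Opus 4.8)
The plan is to follow Rees's original approach, realizing the $\nu_i$ through the normalization of the extended Rees algebra of $I$ and extracting finiteness from the Mori--Nagata theorem.

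\emph{Step 1: the extended Rees algebra and its normalization.} First I would form $A=R[It,t^{-1}]\subseteq R[t,t^{-1}]$, a Noetherian domain in which $t^{-1}$ is a nonzerodivisor; for $g\in R$ one has $g\in I^n$ if and only if $g\in(t^{-1})^nA$, so that $I(g)$ is the order of vanishing of $g$ along $t^{-1}$ in $A$. Pass to the integral closure $\overline A$ of $A$ in $\K(t)$. By the Mori--Nagata theorem $\overline A$ is a Krull domain, even when $R$ is not analytically unramified and $\overline A$ is not a finite $A$-module. Let $P_1,\dots,P_r$ be the height-one primes of $\overline A$ that contain $t^{-1}$; there are finitely many, because in a Krull domain only finitely many height-one primes contain a given nonzero element. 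Localizing $\overline A$ at $P_i$ gives a discrete valuation whose restriction to $\K^*$ I call $\nu_i$. Since $It\subseteq\overline A$ the valuation of $P_i$ is positive on $I$, so $\nu_i$ is a nontrivial $\Z$-valued valuation centered in $R$, with $\nu_i(I)=v_{P_i}(t^{-1})=:a_i>0$.

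\emph{Step 2: integral closures of powers.} The technical core is the identity, for $g\in R$,
$$g\in\overline{I^n}\ \Longleftrightarrow\ gt^n\in\overline A\ \Longleftrightarrow\ \nu_i(g)\geq na_i\ \text{ for all }i.$$
The implication $gt^n\in\overline A\Rightarrow g\in\overline{I^n}$ is seen by writing an integral equation for $gt^n$ over $A$ and taking homogeneous components, which yields an integral equation for $g$ over the ideal $I^n$; the converse holds because $\nu_i(g)\geq na_i$ for all $i$ makes $v_P(gt^n)\geq 0$ for every height-one prime $P$ of $\overline A$, hence $gt^n\in\overline A$ by the Krull property. Consequently the largest $n$ with $g\in\overline{I^n}$ equals $\lfloor\min_i\nu_i(g)/a_i\rfloor$.

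\emph{Step 3: identifying $\bar I$.} Because $f^n\in I^{I(f^n)}\subseteq\overline{I^{I(f^n)}}$, Step 2 gives $I(f^n)\leq n\min_i\nu_i(f)/a_i$, whence $\bar I(f)=\lim_n I(f^n)/n\leq\min_i\nu_i(f)/\nu_i(I)$. For the reverse inequality, fix $n$: by Step 2 we have $f^n\in\overline{I^{k_n}}$ with $k_n=\lfloor n\min_i\nu_i(f)/a_i\rfloor$, so $f^n$ satisfies a monic equation over $I^{k_n}$; iterating this equation produces a bound $I(f^{n\ell})\geq k_n\ell-c_n$ valid for every $\ell$, with $c_n$ depending only on $n$. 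Dividing by $n\ell$ and letting $\ell\to\infty$ gives $\bar I(f)\geq k_n/n$; letting $n\to\infty$ gives $\bar I(f)\geq\min_i\nu_i(f)/\nu_i(I)$. Hence $\bar I(f)=\min_i\nu_i(f)/\nu_i(I)$, and in particular $\bar I(f)\in\Q$.

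\emph{Step 4: irredundancy, uniqueness, and the main difficulty.} Discarding any $\nu_j$ for which the formula still holds on all of $R$ makes the family irredundant. Uniqueness is then formal: evaluating $\bar I$ on elements that separate the valuation ideals of the surviving $\nu_i$---which exist precisely because the representation is irredundant---one checks that any other irredundant representing family consists of exactly these valuations. The two points where genuine work is hidden are the finiteness in Step 1, which for an arbitrary Noetherian domain really does rely on Mori--Nagata and not on finiteness of an integral closure, and the limiting argument in Step 3, which substitutes for the uniform containment $\overline{I^m}\subseteq I^{m-c}$ not available at this level of generality. I expect the finiteness statement to be the conceptual crux.
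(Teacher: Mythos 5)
The paper does not prove Theorem~\ref{Rees_val}; it is quoted and attributed to Rees (\cite{Re0}, \cite{Re1}), so there is no internal proof to compare against. Your argument is the now-standard construction of the Rees valuations via the extended Rees algebra $A=R[It,t^{-1}]$, its integral closure $\overline A$ in $\K(t)$, the Mori--Nagata theorem making $\overline A$ a Krull domain, and the finitely many height-one primes of $\overline A$ containing $t^{-1}$; this is essentially Rees's own route and is the treatment given in Huneke--Swanson's book. The three key ingredients you isolate --- the graded identification $g\in\overline{I^n}\iff gt^n\in\overline A$, the Krull intersection $\overline A=\bigcap_P\overline A_P$ over height-one primes (which reduces the condition to the finitely many $P_i\supset t^{-1}$ because $g\in R\subset\overline A$ forces $v_P(g)\geq 0$ at all other $P$), and the superadditivity/Fekete argument for the limit defining $\bar I$ --- are exactly right, and your observation that finiteness rests on Mori--Nagata rather than on module-finiteness of $\overline A$ is the correct diagnosis of where the generality of ``Noetherian domain'' (as opposed to excellent or analytically unramified) is earned.

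Two small points deserve to be made explicit if this were written out in full. First, you use $\nu_i(I)=a_i$ where $a_i=v_{P_i}(t^{-1})$; this requires a short extra argument (e.g.\ if $It\overline A\subseteq P_i$ in addition to $t^{-1}\in P_i$, then $P_i$ contains $(It,t^{-1})\overline A$, whose contraction to $A$ cuts out a subscheme of dimension strictly less than $\dim A-1$, so $P_i$ could not be minimal over $t^{-1}$), or one can sidestep it by observing that $\nu_i(f)/\nu_i(I)$ is invariant under rescaling the valuation and verifying $\min_i a_i^{-1}\nu_i(x)=1$ for a suitable $x\in I$. Second, the iteration in Step 3 can be made cleanly inductive: from the monic equation of degree $m$ for $f^n$ over $I^{k_n}$ one gets $I(f^{n\ell})\geq\min_{1\leq j\leq m}\bigl(jk_n+I(f^{n(\ell-j)})\bigr)$ for $\ell\geq m$, and induction with the trivial base $I(f^{n\ell})\geq 0\geq k_n\ell-k_n(m-1)$ for $\ell<m$ gives $I(f^{n\ell})\geq k_n\ell-k_n(m-1)$; equivalently one can invoke that $\overline{I^{k_n}}$ and $(I^{k_n},f^n)$ have the same integral closure, so $I^{k_n}$ is a reduction of $(I^{k_n},f^n)$. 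Finally, as you acknowledge, Step 4 is only a sketch: uniqueness of the irredundant family (up to equivalence of valuations) requires checking that for each surviving $i$ there is an $f$ realizing the minimum at $\nu_i$ alone, and Rees's original argument does carry real content there.
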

\begin{rmk} This Theorem has been stated and proved by Rees assuming only that $R$ is noetherian (not necessarily a domain); here we restrict ourselves to the case of domains in order to simplify the exposition.
\end{rmk}
\begin{rmk} In view of Theorem \ref{Rees_val}, Theorem \ref{Izumi} can be reformulated as follows: every divisorial valuation $\nu$ is linearly equivalent to the $\m$-adic order. This means that there exists a positive integer $k$ such that $\nu(f)<k\m(f)$ for all $f\in R$.  
\end{rmk}
In the case $R$  is a local domain and $I=\m$, the valuations $\nu_i$ appearing in the statement of Theorem \ref{Rees_val} are divisorial valuations centered in $R$.\\
From the definitions we see easily that $\bar I(f)\geq I(f)$ for all $f$. We will need the following result of D. Rees in order to derive some corollaries of Theorem \ref{Izumi} about ideal-adic topologies in noetherian rings:

\begin{theorem}\label{Rees_th}\cite{Re2} Let $R$  be an analytically unramified local ring. Then there exists a constant $C>0$ such that
$$\bar I(f)\leq  I(f)+C\ \ \ \ \forall f\in R.$$
\end{theorem}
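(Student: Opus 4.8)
\emph{Step 1: reduction.} The plan is to reduce the theorem to a uniform comparison $\overline{I^n}\subseteq I^{n-C}$ between the integral closures of the powers of $I$ (I write $\overline J$ for the integral closure of an ideal $J$, with the convention $I^m:=R$ for $m\leq 0$) and the ordinary powers, and then to obtain this comparison from analytic unramifiedness via the integral closure of the Rees algebra. For the reduction it suffices to find a constant $C$ with $\overline{I^n}\subseteq I^{n-C}$ for all $n$: fix $f\in R$, which we may take nonzero (for $f=0$ one has $I(f)=\bar I(f)=\infty$ and there is nothing to prove), so that $I(f)<\infty$ by the Krull intersection theorem. Since $I(f^{a+b})\geq I(f^a)+I(f^b)$, the limit defining $\bar I(f)$ equals $\sup_{m\geq 1}\frac{I(f^m)}{m}$. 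If $n\geq 1$ and $\bar I(f)\geq n$, then $I(f^m)>(n-1)m$ for some $m$, i.e. $f^m\in(I^{n-1})^m$; hence $f$ satisfies $X^m-f^m=0$, which is an equation of integral dependence of $f$ on the ideal $I^{n-1}$, so $f\in\overline{I^{n-1}}$. If moreover $n-1\geq C$, we get $f\in I^{n-1-C}$, so $I(f)\geq n-1-C$; taking $n=\lfloor\bar I(f)\rfloor$ (the case $n-1<C$ being trivial since $I(f)\geq 0$) yields $\bar I(f)\leq I(f)+C'$ for a constant $C'$ depending only on $C$, which is the assertion.

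\emph{Step 2: the comparison.} Let $\mathcal A:=\bigoplus_{n\geq 0}I^nt^n\subseteq R[t]$ be the Rees algebra of $I$, and let $\mathcal S$ be its integral closure in $R[t]$; then $\mathcal S$ is a graded subring of $R[t]$ and, by a routine computation with equations of integral dependence, $\mathcal S=\bigoplus_{n\geq 0}\overline{I^n}\,t^n$. The crucial input — the only place where $R$ being analytically unramified is used — is Rees's theorem that $\mathcal S$ is then a \emph{finitely generated} $\mathcal A$-module. Granting it, choose homogeneous $\mathcal A$-module generators of $\mathcal S$ of degrees at most $C$; comparing components in degree $n\geq C$ gives
$$\overline{I^n}\subseteq\sum_{j=0}^{C}I^{n-j}\overline{I^j}\subseteq\sum_{j=0}^{C}I^{n-j}\subseteq I^{n-C},$$
using only $\overline{I^j}\subseteq R$. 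This is exactly the comparison needed in Step 1.

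\emph{The main obstacle.} Everything above is bookkeeping; the substance is the finite generation of $\mathcal S$ over $\mathcal A$, which is the real content of Rees's theorem and is where analytic unramifiedness is consumed. One first establishes the corresponding statement over $\widehat{R}$: being reduced, $\widehat{R}$ embeds into the module-finite $\widehat{R}$-algebra $\prod_i\widehat{R}/\mathfrak p_i$ (the $\mathfrak p_i$ being the minimal primes of $\widehat{R}$), a finite product of complete local \emph{domains}; each such factor is excellent, hence Nagata, so the integral closure of its Rees algebra inside the relevant polynomial ring is module-finite over that Rees algebra. Assembling these and intersecting with $\widehat{R}[t]$ shows the integral closure $\mathcal T$ of $\widehat{R}[It]$ in $\widehat{R}[t]$ is a finite $\widehat{R}[It]$-module. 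One then descends to $R$: $\mathcal S\otimes_R\widehat{R}$ is integral over $\mathcal A\otimes_R\widehat{R}=\widehat{R}[It]$ and contained in $\widehat{R}[t]$, hence embeds into $\mathcal T$ and is a finite $\widehat{R}[It]$-module, and finiteness descends to $\mathcal S$ over $\mathcal A$ by faithful flatness of $R\to\widehat{R}$. I expect the heart of the difficulty to be the passage to the complete local domains together with the Nagata input (module-finiteness of normalizations of Rees algebras); once $\mathcal S$ is finite over $\mathcal A$, Step 2 and hence the theorem follow formally.
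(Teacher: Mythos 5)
The paper does not prove this statement; it is cited as a known result of Rees (\cite{Re2}), so there is no internal proof to compare against. Judged on its own, your argument is correct and is the standard modern treatment of Rees's theorem. Step~1 correctly converts the degree-form comparison $\overline{I^n}\subseteq I^{n-C}$ to the order-form estimate $\bar I(f)\leq I(f)+C'$, using $I(f^{a+b})\geq I(f^a)+I(f^b)$ and Fekete to rewrite the limit as a supremum, and then reading off an equation of integral dependence from $f^m\in(I^{n-1})^m$; as a point of hygiene, you should note that the same chain of inclusions also forces $\bar I(f)<\infty$ (if $\bar I(f)\geq n$ for all $n$ then $f\in\bigcap_k I^k=0$), since otherwise $\lfloor\bar I(f)\rfloor$ is not defined. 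Step~2 is the standard degree-by-degree extraction of $\overline{I^n}\subseteq I^{n-C}$ from the module-finiteness of the integral closure $\mathcal S=\bigoplus\overline{I^n}t^n$ of the Rees algebra. The reduction of that finiteness to the Nagata property of complete local domains via the embedding $\widehat R\hookrightarrow\prod\widehat R/\mathfrak p_i$ (which is module-finite, so the integral closure in question sits inside a finite module over a Noetherian ring), followed by faithfully flat descent along $R\to\widehat R$, is exactly where analytic unramifiedness is consumed, and the sketch is sound. This is essentially how the theorem is proved in, e.g., Huneke--Swanson; Rees's original 1956 paper phrases the argument somewhat differently (via valuations and pseudo-valuations), but the mathematical content is the same, and your route is the cleaner one.
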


The goal of this paper is to generalize Theorem \ref{Izumi} to a larger class of valuations: the Abhyankar valuations whose semigroup is archimedean.\\
\\

Let us begin with some definitions.
Let $\K$ denote the field of fractions of a local domain $R$.
Consider a valuation
$\nu: \K^*\twoheadrightarrow \g $  of $\K$ with value group
$\g$, centered in $R$. Let $R_{\nu}$ denote the valuation ring of $\nu$ and
$\m_{\nu}$ the maximal ideal of $R_{\nu}$. Since $\nu$ is centered
in $R$, we have a natural injection $\k \subset \frac {R_{\nu}}{\m_{\nu}}$. The three basic invariants associated with $\nu$ are
\begin{align} \td_{\k }\ \nu &:= \td\  \left(\frac{R_{\nu}}{\m_{\nu}}/\k \right)\\
\rr\ \nu &:=\dim_{\Q}(\g\otimes_{\Z}\Q)\\
\text{rk}\ \nu&:=\dim R_{\nu},
\end{align}
where ``dimension'' means the Krull dimension.
In 1956 S. Abhyankar proved that
\begin{equation}\label{1.1}\rr\ \nu+\td_{\k }\ \nu\leq  \dim R\end{equation}
(cf. \cite{1}, Theorem 1, p. 330).  This inequality is called the \textit{Abhyankar Inequality}. An {\it Abhyankar valuation} is a valuation centered in $R$  such that the above inequality is an equality. Any divisorial valuation centered in $R$  satisfies $\rr\ \nu=1$ and $\td_{\k}\ \nu=\dim R-1$; hence it is an Abhyankar valuation.\\
Let
$$\Phi := \nu(R\setminus\{0\})\subset\g.
$$
Then $\Phi$ is an ordered semigroup contained in $\g$. Since $R$  is
noetherian, $\Phi$ is a well-ordered set. For $\alpha\in\Phi$, let
$$
\p_{\alpha}\ := \{x\in R\mid \nu(x)\ge\alpha\}
$$
and
$$\p_{\alpha+}\ := \{x\in R \mid \nu(x)>\alpha\}
$$
(here we adopt the convention that $\nu(0)>\alpha$ for any
$\alpha\in\g$). Ideals of $R$  which are contractions to $R$  of ideals
in $R_{\nu}$ are called $\nu${\it-ideals}. All of the $\p_{\alpha}$ and
$\p_{\alpha+}$ are $\nu$-ideals and
$\{\p_{\alpha}\}_{\alpha\in\Phi}$ is the complete list of
$\nu$-ideals in $R$. Specifying all the $\nu$-ideals in $R$  is equivalent to
specifying $\nu$ (see \cite{26}, Appendix 3). The following is a characterization of $\nu$-ideals: an ideal $I\subset R$ is a $\nu$-ideal if and only if, for any elements $a\in R$, $b\in I$ such that $\nu(a)\geq\nu(b)$ we have $a\in I$.

We associate to $\nu$ the following graded algebra:
$$
\gr_{\nu}R:=\bigoplus_{\alpha\in\Phi}
\frac{\p_{\alpha}}{\p_{\alpha+}}\ .
$$

\begin{definition} We say that $\h$ is \emph{archimedian} if for any
$\alpha,\beta\in\h$, $\alpha\ne0$, there exists $r\in\N$ such that
$r\alpha>\beta$.
\end{definition}
This is equivalent to saying that every $\nu$-ideal in $R$  is $\m$-primary
and weaker than saying that $\text{rk}\ \nu=1$.

For $l\in\N$, let $Q_l$ denote the $\nu$-ideal
\begin{equation}\label{1.2}
Q_l:=\{x\in R\mid\nu(x)\geq l\nu(\m)\}.
\end{equation}
Of course, $\m^l\subset Q_l$ for all $l\in\N$.
\begin{definition} We say that the $\nu$-adic and the $\m$-adic topologies are
linearly equivalent if there exists $r\in\N$ such that
$$
Q_{rl}\subset\m^l
$$
for all $l\in\N$.
\end{definition}
Thus Theorem \ref{Izumi} is equivalent to saying that  for any divisorial valuation $\nu$ of $R$  the $\nu$-adic topology is linearly equivalent to the $\m$-adic topology. \\
Let $\displaystyle A=\bigoplus\limits_{\alpha\in\h}A_\alpha$ be a $\h$-graded $\k$-algebra. Assume
that $A_0=\k$. By abuse of notation, let us denote
$$
1:=\nu(\m)
$$
and for $l\in\N$
$$
l:=l\cdot1.
$$
\begin{definition} We say that $A$ is {\rm weakly noetherian of dimension }$d$ if
$A$ contains $d$ algebraically independent elements and the function
$$
F(l)=\sum_{0\le\alpha\leq  l}\dim_\k A_\alpha
$$
is bounded above by a polynomial in $l$ of degree $d$.\end{definition}
If $A$ is weakly noetherian then $\dim_{\k} A_{\alpha}<\infty$ for
all $\alpha\in\h$. We now state the main theorem of this paper.
\begin{theorem}\label{main_th} Let $(R,\m,\k)$ be a local noetherian domain with field of
fractions $\K$. Let $\nu$ be a valuation of $\K$ centered in $R$  with value
semigroup $\h$. Then the following two conditions are equivalent:
\begin{enumerate}
\item $R$  is analytically irreducible, $\h$ is archimedian and
\begin{equation}\label{1.3}
\rr\ \nu+\td_\k\nu=\dim\ R\end{equation}

\item the $\nu$-adic and the $\m$-adic topologies are linearly equivalent
and $\gr_\nu R$ is weakly noetherian.
\end{enumerate}
\end{theorem}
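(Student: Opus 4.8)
The plan is to prove the two implications separately. The implication $(2)\Rightarrow(1)$ is comparatively soft, so I treat it first; the content of $(1)\Rightarrow(2)$ is the genuine generalization of Izumi's theorem and is where the real difficulty lies.

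For $(2)\Rightarrow(1)$ there are three things to check. First, linear equivalence gives $\bigcap_{l}Q_{l}\subseteq\bigcap_{l}\m^{l}=0$; since every nonzero element of $\h$ is $\geq\nu(\m)$, while $\bigcap_{l}Q_{l}=0$ says every element of $\h$ is $<l\,\nu(\m)$ for some $l$, the semigroup $\h$ is archimedian. Second, for analytic irreducibility I would argue by contradiction: if $\widehat R$ had a nonzero minimal prime (nilpotents) or more than one minimal prime, pick a minimal prime $\mathfrak q$ over which $\nu$ extends to a valuation $\widehat\nu$ centered in $\widehat R/\mathfrak q$, and choose $x_{k}\in R$ approximating $\m$-adically an element of $\widehat R$ whose $\m$-adic order in $\widehat R$ (hence in $R$) stays bounded but whose image in $\widehat R/\mathfrak q$ lies in arbitrarily high powers of the maximal ideal; then $\nu(x_{k})=\widehat\nu(x_{k})\to\infty$ while $\m(x_{k})$ is bounded, contradicting the inequality $\nu(x)\leq C\,\m(x)$ which linear equivalence provides. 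Third, linear equivalence yields a surjection $R/Q_{rl}\twoheadrightarrow R/\m^{l}$, so the function $F$ attached to $\gr_{\nu}R$ satisfies $F(rl)\geq\text{length}(R/\m^{l})$, which grows like $l^{\dim R}$; on the other hand weak noetherianity bounds $F$ by a polynomial of degree $d$ and exhibits $d$ algebraically independent elements in $\gr_{\nu}R\hookrightarrow\gr_{\nu}R_{\nu}$. Since the homogeneous fraction ring of $\gr_{\nu}R_{\nu}$ has transcendence degree $\rr\,\nu+\td_{\k}\nu$ over $\k$, this forces $\dim R\leq d\leq\rr\,\nu+\td_{\k}\nu$, which together with the Abhyankar inequality $(1.1)$ gives the equality $(1.3)$.

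Now $(1)\Rightarrow(2)$. The weak noetherianity of $\gr_{\nu}R$ is the easy half: each $Q_{l}$ is $\m$-primary (it contains $\m^{l}$ and is contained in $\m$), so $F(l)\leq\text{length}(R/Q_{l+1})\leq\text{length}(R/\m^{l+1})$, a polynomial of degree $\dim R$ in $l$; and the Abhyankar equality $(1.3)$ supplies $\dim R$ algebraically independent initial forms — $\rr\,\nu$ of them from elements of $R$ with $\Q$-linearly independent values, and $\td_{\k}\nu$ more by writing a transcendence basis of $\k_{\nu}/\k$ as quotients of elements of $R$ of equal value and taking their initial forms. It remains to prove linear equivalence, i.e. to produce $C$ with $\nu(x)\leq C\,\m(x)$ for all $x\in R$ (equivalent to $Q_{rl}\subseteq\m^{l}$, and the precise analogue of Izumi's theorem). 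The strategy is: (a) reduce to $R$ complete, using analytic irreducibility so that $\widehat R$ is a domain, and then, replacing $R$ by its module-finite normalization, to $R$ complete and normal; at each such step $\nu$ extends to a valuation which is again Abhyankar (its invariants cannot drop and the dimension is unchanged) with archimedian semigroup, and the reduction is legitimate because, along a module-finite extension $R\subseteq S$ of local domains, the $\m_{R}$-adic and $\m_{S}$-adic orders are linearly equivalent on elements of $R$ (Artin--Rees together with $\m_{R}S$ being $\m_{S}$-primary), so a linear bound over $S$ descends to one over $R$ — this norm/order comparison is, I expect, the role of Lemma \ref{lemma_norm}; (b) for $R$ complete normal and $\nu$ Abhyankar, use the structure theory of Abhyankar valuations — generating sequences and local uniformization/monomialization, possibly after a further finite extension of the fraction field — to reduce to the case where $\nu$ is the monomial valuation of a regular local ring $R'$ birationally dominating $R$; there $\nu(x)\leq(\max_{i}w_{i})\,\m_{R'}(x)$ is elementary, and since $\text{ord}_{\m_{R'}}$ is a \emph{divisorial} valuation centered in the analytically irreducible ring $R$, Izumi's theorem (Theorem \ref{Izumi}) gives $\m_{R'}(x)\leq C'\,\m_{R}(x)$ for $x\in R$; composing the inequalities gives the required $C$.

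The main obstacle is step (b): converting the mere (weak) noetherianity of $\gr_{\nu}R$ into genuine ideal-theoretic control, i.e. passing from polynomial growth of the colengths $\text{length}(R/\p_{\alpha})$ — which is cheap — to the linear \emph{containment} $Q_{rl}\subseteq\m^{l}$. This requires analyzing the joint $\m$-adic/$\nu$-adic structure — equivalently, the support cone of the bigraded algebra $\bigoplus_{i,\alpha}(\m^{i}\cap\p_{\alpha})/(\m^{i+1}\cap\p_{\alpha}+\m^{i}\cap\p_{\alpha+})$ — and it is precisely weak noetherianity that keeps this cone finitely generated, hence of finite slope. I expect Lemma \ref{lemma_norm}, together with the reduction to a monomialized model and the invocation of the classical Izumi theorem, to be the heart of the matter, and its proof — rather than the surrounding reductions or the soft implication $(2)\Rightarrow(1)$ — to be where essentially all the work is concentrated.
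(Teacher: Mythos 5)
Your treatment of $(2)\Rightarrow(1)$ is essentially the paper's (archimedian-ness and analytic irreducibility from equivalence of topologies, then the Hilbert-function comparison $\text{length}(R/\m^l)\leq\text{length}(R/Q_{rl})$ against the polynomial degree bound from weak noetherianity, closed by the Abhyankar inequality), and your derivation of weak noetherianity under $(1)$ is also the same. The difficulty is in your step (b) for the linear equivalence of topologies, and there you and the paper genuinely diverge --- and your route has a gap.

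You propose to reduce, ``using the structure theory of Abhyankar valuations --- generating sequences and local uniformization/monomialization,'' to the case where $\nu$ is monomial on a regular local ring $R'$ birationally dominating $R$, and then to apply Izumi's Theorem to the divisorial valuation $\text{ord}_{\m_{R'}}$. But local uniformization of Abhyankar valuations is not available in the generality of the theorem: the paper explicitly notes that Cutkosky's monomialization result, which is exactly what your step (b) needs, is only known when $R$ contains a field of characteristic zero. In positive and mixed characteristic the reduction you invoke is precisely the open problem one must avoid, and the whole point of the paper's argument is to get around it. The paper's substitute is to choose a system of parameters $(x_1\ddo x_d)$ of the completion whose initial forms are algebraically independent in $\gr_\nu R$ (this is the Claim), so that $R$ is module-finite over the complete regular local ring $S=\k[[x]]$ (or $W[[x_2\ddo x_d]]$). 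The crucial observation (Lemma \ref{lemma2.5}) is that the restriction $\nu_0$ of $\nu$ to $\L=\Frac(S)$ is \emph{automatically} monomial with respect to $x$, simply because the $\bar x_i$ are algebraically independent in $\gr_{\nu_0}S$ --- no uniformization is needed on $S$, one just builds $S$ to have this property. Then Lemma \ref{lemma2.6} gives linear equivalence of $\nu_0$ and $\m_0=\m\cap S$ on $S$ for free.

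You have also misread the role of Lemma \ref{lemma_norm}. It is not the soft statement that module-finite extensions induce linearly equivalent $\m$-adic topologies (that indeed follows from Artin--Rees and would let you pass from $R$ to its normalization or to a finite extension, which both you and the paper do). Lemma \ref{lemma_norm} goes the other way, from $R$ down to the subring $S$: it asserts $\m_0(N_{\K/\L}(f))\leq r\,\m(f)$ for $f\in R$, i.e.\ a linear lower bound on $\m(f)$ in terms of the $\m_0$-adic order of the norm of $f$. Combined with $\nu(f)\leq\nu_0(N_{\K/\L}(f))\leq r'\,\m_0(N_{\K/\L}(f))$ from the monomial case on $S$, this closes the argument. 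The proof of Lemma \ref{lemma_norm} is itself where the classical Izumi theorem for divisorial valuations enters (via the normalized blowup $\bar R'$, its finitely many divisorial valuations $\nu_i$, and the Galois action on them), and is not reducible to Artin--Rees. So the heart of the matter is not a monomialization theorem for $\nu$ but the pair (Lemma \ref{lemma2.5}, Lemma \ref{lemma_norm}): the first replaces uniformization by a carefully chosen coefficient subring, and the second transports the estimate from that subring back to $R$ via the norm.
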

This theorem is proved in Part \ref{S2}. Part \ref{S3} is devoted to applications of Izumi's Theorem and of Lemma \ref{lemma_norm}.\\

The most difficult part in the proof of Theorem \ref{main_th} is the implication (1)$\Longrightarrow$ "linear equivalence between the $\nu$-adic and the $\m$-adic topologies". Let us mention that this implication has been proved by D. Cutkosky \cite{2} in the case $R$ contains a characteristic zero field. More precisely he proved in this case that an Abhyankar valuation is quasi-monomial after a sequence of blow-ups. Thus, by Theorem \ref{Izumi}, the $\nu$-adic and the $\m$-adic topologies are linearly equivalent. This implication has also been proved in the case when $(R,\m)$ is a complete local ring containing a field of positive characteristic and $\nu$ is the composition of a morphism $R\lgw S$, where $(S, \m_0)$ is a regular complete local ring with $\frac{S}{\m_0}\simeq\frac{R}{\m}$, and of the $\m_0$-adic valuation of $S$ (see \cite{R}).\\
\\
Theorem \ref{main_th} has been announced by the second author and a proof was sketched in \cite{Sp1} without details but the entire proof was never published. The proof presented here follows the sketched proof announced in \cite{Sp1}.


\subsection{Conventions}

 Let $R$  be an integral domain with field of
fractions $\K$. Let $\nu$ be a valuation of $\K$ with value group $\Gamma$.

The notation $R_{\nu}$ will stand for the valuation ring of $\nu$ (the ring of all
the elements of $\K$ whose values are non-negative) and $\m_{\nu}$
the maximal ideal of $R_{\nu}$ (the elements with strictly positive
values).
 If $\p\subset R$ is an ideal, we
put
$$\nu(\p):=\ \min\  \{\nu(x)\mid x\in \p\}.$$
If $R$  is noetherian, this minimum is always achieved.\\
If $x$ is an element of $R$, $\bar x$ will always
mean the natural image of $x$ in $\gr_\nu R$.

We freely use the multi-index notation: if $x=(x_1\ddo x_n)$,
$\alpha=(\alpha_1\ddo \alpha_n)\in\N_0^n$, then $x^{\alpha}$ stands for
$\prod\limits_{i=1}^nx_i^{\alpha_i}$. The syumbol $|\alpha|$ will stand for $\sum\limits_{i=1}^n\alpha_i$.

Let $R$  be a regular local ring with regular system of parameters $x=(x_1\ddo x_n)$. A valuation $\nu$, centered at $R$, is said to be \emph{monomial} with respect to $x$ if all the $\nu$-ideals of $R$  are generated by monomials in $x$.


\section{Proof of Theorem \ref{main_th}}\label{S2}

The purpose of this section is to prove Theorem \ref{main_th}.
We start with a few remarks.

\vskip0.1in
\noindent
\begin{rmk}\label{2impliesanirred} Suppose the $\m$-adic and the $\nu$-adic topologies in $R$  are
equivalent. Then $\h$ is archimedian. Indeed, to say that $\h$ is not
archimedian is equivalent to saying that there exists a $\nu$-ideal $\p$ in
$R$ which is not $\m$-primary. But
then $\p$ is open in the $\nu$-adic
topology, but not in the $\m$-adic topology, which is a contradiction.

Moreover, the equivalence of topologies implies that $R$  is analytically
irreducible. Indeed, let $\{a_n\}$, $\{b_n\}$ be two Cauchy sequences for the $\m$-adic topology in
$R$  such that
\begin{align}
\lim_{n\to\infty}a_n&\ne0\label{eq:anto0}\\
\lim_{n\to\infty}b_n&\ne0\label{eq:bnto0},
\end{align}

but
$$
\lim_{n\to\infty}a_nb_n=0.
$$
By the equivalence of topologies, (\ref{eq:anto0}) and (\ref{eq:bnto0}), $\nu(a_n)$ and $\nu(b_n)$
are independent of $n$ for $n\gg0$, hence so is $\nu(a_nb_n)$, which contradicts the
fact that
$$
\lim_{n\to\infty}a_nb_n=0
$$
in the $\nu$-adic topology.\end{rmk}
\vskip0.1in\noindent
\begin{rmk} The fact that $\h$ is archimedian together with the equality (\ref{1.3})
implies that the $\m$-adic and the $\nu$-adic topologies are equivalent (see
\cite{25}, pp. 63--64).\end{rmk}
\begin{lemma}\label{lem2.3} Let $R$ be any local domain whatsoever and $\nu$ any valuation of
the field of fractions, centered in $R$. Let
$x_1\ddo x_r$ be elements of $R$ such that $\{\nu(x_i)\}_{1\leq  i\leq  r}$ are
linearly independent over $\Z$. Let $y_1\ddo y_t$ be elements of $R_{\nu}$
such that the natural images $\bar y_i$ of the $y_i$ in $\frac{R_{\nu}}{\m_{\nu}}$ are
algebraically independent over $\k$. Assume that there exists a monomial $b=x^\omega$ such that
$by_i\in R$ for all $i$, $1\leq  i\leq  t$. Then the natural images of $x_1\ddo
x_r, by_1\ddo by_t$ in $\gr_\nu R$ are algebraically independent over $\k$.
\end{lemma}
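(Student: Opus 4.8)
The conclusion is equivalent to saying that the $\k$-algebra homomorphism
$$
\phi\colon\ \k[X_1\ddo X_r,Z_1\ddo Z_t]\ \lgw\ \gr_\nu R,\qquad X_j\lgm\bar x_j,\quad Z_i\lgm\overline{by_i},
$$
is injective, and this is what I would prove. First I would record that, since $\bar y_i\ne 0$ in $R_\nu/\m_\nu$, we have $y_i\notin\m_\nu$, so $\nu(y_i)=0$ and hence $\nu(by_i)=\nu(b)$. Grading the polynomial ring $\k[X,Z]$ by the value group $\g$ via $\deg X_j:=\nu(x_j)$ and $\deg Z_i:=\nu(b)$, every generator $\bar x_j$, $\overline{by_i}$ of the image of $\phi$ is homogeneous, so $\phi$ is a homomorphism of $\g$-graded rings. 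Consequently $\Ker\phi$ is a graded ideal, and it suffices to show that every homogeneous polynomial $P\in\Ker\phi$ is zero.

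Fix such a $P$, homogeneous of degree $\delta$, written $P=\sum_{(\alpha,\gamma)\in S}\lambda_{\alpha,\gamma}X^\alpha Z^\gamma$ with support $S\subset\N_0^r\times\N_0^t$ and all $\lambda_{\alpha,\gamma}\ne 0$, and suppose towards a contradiction that $S\ne\emptyset$. From $b=x^\omega$ we get $\nu(b)=\sum_j\omega_j\nu(x_j)$, so the degree of the monomial $X^\alpha Z^\gamma$ equals $\sum_j(\alpha_j+|\gamma|\,\omega_j)\nu(x_j)$. The crucial point is that, since $\nu(x_1)\ddo\nu(x_r)$ are $\Z$-linearly independent, the equation $\sum_j(\alpha_j+|\gamma|\,\omega_j)\nu(x_j)=\delta$ determines the integer vector $\mathbf d:=\alpha+|\gamma|\,\omega\in\N_0^r$ uniquely; hence $\mathbf d$ is the same for every $(\alpha,\gamma)\in S$. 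In particular $\alpha=\mathbf d-|\gamma|\,\omega$ is determined by $\gamma$, so the projection $(\alpha,\gamma)\mapsto\gamma$ is injective on $S$.

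Next I would lift the relation back to $R$. Because $\nu$ is multiplicative, the image in $\gr_\nu R$ of a product equals the product of the images; combining this with the homogeneity of $P$, one sees that $\phi(P)$ is the image $\bar g$ of
$$
g:=\sum_{(\alpha,\gamma)\in S}\lambda_{\alpha,\gamma}\,x^\alpha(by)^\gamma\ \in R,\qquad (by)^\gamma:=\prod_{i=1}^t(by_i)^{\gamma_i}.
$$
Since $x^\alpha(by)^\gamma=x^{\alpha+|\gamma|\omega}y^\gamma=x^{\mathbf d}y^\gamma$ in $R_\nu$, this yields $g=x^{\mathbf d}\,q(y_1\ddo y_t)$, where $q(Y):=\sum_{(\alpha,\gamma)\in S}\lambda_{\alpha,\gamma}Y^\gamma\in\k[Y_1\ddo Y_t]$; by the injectivity of $(\alpha,\gamma)\mapsto\gamma$ on $S$, the coefficients of $q$ are exactly the $\lambda_{\alpha,\gamma}$, so $q$ is a nonzero polynomial. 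As $\bar y_1\ddo\bar y_t$ are algebraically independent over $\k$ in $R_\nu/\m_\nu$, we get $q(\bar y)\ne0$, i.e.\ $q(y)\notin\m_\nu$, so $\nu(q(y))=0$ and
$$
\nu(g)=\nu(x^{\mathbf d})=\sum_{j=1}^r d_j\,\nu(x_j)=\delta.
$$
Therefore $\bar g\ne 0$ in $\p_\delta/\p_{\delta+}$, contradicting $\phi(P)=\bar g=0$. Hence $S=\emptyset$, i.e.\ $P=0$; thus $\Ker\phi=0$ and $\bar x_1\ddo\bar x_r,\overline{by_1}\ddo\overline{by_t}$ are algebraically independent over $\k$.

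The computations are routine; the one step that genuinely uses the hypotheses is the passage to homogeneous $P$ combined with the observation that $\Z$-linear independence of the $\nu(x_j)$ rigidifies the exponent $\alpha+|\gamma|\omega$ — this is precisely what allows $g$ to be factored as $x^{\mathbf d}\,q(y)$, thereby separating the part of $\nu(g)$ carried by the $x_j$ (the ``value'' part) from the part carried by the $y_i$ (the ``residue'' part), after which the algebraic independence of the $\bar y_i$ over $\k$ closes the argument. The degenerate cases ($\omega=0$, so $b=1$; or $r=0$; or $t=0$) are covered by the same computation.
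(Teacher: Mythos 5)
Your proof is correct and follows essentially the same route as the paper's: reduce to a $\nu$-homogeneous relation, use $\Z$-linear independence of the $\nu(x_i)$ to force the exponent vector $\alpha+|\gamma|\omega$ to be constant over the support, factor out $x^{\mathbf d}$, and invoke algebraic independence of the $\bar y_i$ in $R_\nu/\m_\nu$ to get a contradiction. The paper phrases the cancellation step as dividing by $\bar x^\lambda$ in the integral domain $\gr_\nu R$ (strictly, in $\gr_\nu R_\nu$), whereas you lift the whole computation back to $R_\nu$ and compute $\nu(g)$ directly — a cosmetic difference only.
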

\begin{proof} The algebra $\gr_\nu R$ is an integral domain, on which $\nu$ induces a natural
valuation, which we shall also denote by $\nu$. Let
$$
z_i:=by_i,\qquad 1\leq  i\leq  s.
$$
Consider an algebraic
relation
\begin{equation}\label{2.1}
\sum_{\alpha,\beta}\bar c_{\alpha,\beta}\bar x^\alpha
\bar z^\beta=0,\qquad\alpha\in\N_0^r,\beta\in\N_0^s, \bar
c_{\alpha,\beta}\in\k,\end{equation}

where the $\bar c_{\alpha,\beta}$ are not all zero. Here $\bar x_i$, $\bar z_i$
denote the
natural images in $\gr_\nu R$ of $x_i$ and $z_i$, respectively. We may
assume that (\ref{2.1}) is homogeneous with respect to $\nu$. At least two of the
$\bar c_{\alpha,\beta}$ must be non-zero. Take a pair
$(\alpha,\beta),(\gamma,\delta)\in\N_0^{s+r}$ such that
\begin{align}
\bar c_{\alpha,\beta}&\neq0\\
\bar c_{\gamma,\delta}&\neq0.
\end{align}

Then
$$
\bar x^\alpha\bar b^{|\beta|}=\bar x^\gamma\bar b^{|\delta|}
$$
otherwise the equality
$$
\nu(\bar c_{\alpha,\beta}\bar x^\alpha\bar z^\beta)=\nu(\bar
c_{\gamma,\delta}\bar x^\gamma\bar z^\delta)
$$
would give a rational dependence between the $\nu(x_i)$. This implies that (\ref{2.1}) can be rewritten in the form
$$
\bar x^{\lambda}\sum_{\alpha,\beta}\bar c_{\alpha,\beta}\bar y^{\beta}=0
$$
and hence
\begin{equation}\label{2.2}
\sum_{\alpha,\beta}\bar c_{\alpha,\beta}\bar y^{\beta}=0,\end{equation}
since $\gr_{\nu} R$ is an integral domain. But this contradicts the choice of
the $y_i$ and the Lemma is proved.
\end{proof}
\begin{corollary}\label{cor2.4} Under the assumptions of Lemma \ref{lem2.3}, let

\begin{align}
r:&=\rr\ \nu\\
t:&=\td_\k\nu.
\end{align}

Then $\td_\k\gr_\nu R=r+t$. Here we allow the possibility for both sides to
be infinite. In particular, if $\gr_\nu R$ is weakly
noetherian, its dimension
must be $r+t$.
\end{corollary}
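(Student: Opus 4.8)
The plan is to prove the two inequalities separately. The inequality $\td_\k\gr_\nu R\ge r+t$ needs nothing new: it is exactly the conclusion of Lemma \ref{lem2.3}, which exhibits the $r+t$ algebraically independent elements $\bar x_1\ddo\bar x_r,\ \ovl{by_1}\ddo\ovl{by_t}$ of $\gr_\nu R$. (If $r$ or $t$ is infinite I would run the same argument with arbitrarily large families of $x_i$'s, resp.\ with arbitrarily large families $y_j\in R_\nu$ having algebraically independent residues together with a common denominator $b\in\m$, to conclude $\td_\k\gr_\nu R=\infty$.) Thus the real content is the reverse inequality $\td_\k\gr_\nu R\le r+t$.

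For this I would compare $\gr_\nu R$ with the associated graded ring of the valuation ring itself. For $\alpha\in\g$ set $\m_\nu^\alpha:=\{z\in R_\nu\mid\nu(z)\ge\alpha\}$, $\m_\nu^{\alpha+}:=\{z\in R_\nu\mid\nu(z)>\alpha\}$, and $\gr_\nu R_\nu:=\bigoplus_{\alpha\ge0}\frac{\m_\nu^\alpha}{\m_\nu^{\alpha+}}$. Since $\nu$ is centered in $R$ one has $\p_\alpha=\m_\nu^\alpha\cap R$ and $\p_{\alpha+}=\m_\nu^{\alpha+}\cap R$, so the inclusion $R\subset R_\nu$ induces an injective homomorphism of graded $\k$-algebras $\gr_\nu R\hookrightarrow\gr_\nu R_\nu$; hence $\td_\k\gr_\nu R\le\td_\k\gr_\nu R_\nu$, and it suffices to show $\td_\k\gr_\nu R_\nu=r+t$.

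The heart of the matter is that the graded pieces of $\gr_\nu R_\nu$ are as small as possible: if $z,z'\in R_\nu$ have the same value then $z/z'\in R_\nu^\times$, so each $\frac{\m_\nu^\alpha}{\m_\nu^{\alpha+}}$ (for $\alpha$ in the value semigroup $\{\gamma\in\g\mid\gamma\ge0\}$ of $R_\nu$) is a one-dimensional $\k_\nu$-vector space. Choosing a nonzero $t_\alpha$ in each, one gets $t_\alpha t_\beta=c_{\alpha,\beta}\,t_{\alpha+\beta}$ with $c_{\alpha,\beta}\in\k_\nu^\times$. Now pick $\alpha_1\ddo\alpha_r\ge0$ whose images form a $\Q$-basis of $\g\otimes_\Z\Q$: a homogeneous algebraic relation among $t_{\alpha_1}\ddo t_{\alpha_r}$ would force the equality of the $\nu$-degrees of two distinct monomials, i.e.\ a $\Z$-linear relation among the $\alpha_i$, so the $t_{\alpha_i}$ are algebraically independent over $\k_\nu$; conversely, since all the graded pieces of $\gr_\nu R_\nu$ and of its homogeneous localization are one-dimensional over $\k_\nu$, every homogeneous element has a power that is a $\k_\nu$-multiple of a Laurent monomial in $t_{\alpha_1}\ddo t_{\alpha_r}$, so $\Frac(\gr_\nu R_\nu)$ is algebraic over $\k_\nu(t_{\alpha_1}\ddo t_{\alpha_r})$. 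Therefore $\td_{\k_\nu}\gr_\nu R_\nu=r$, and as $\td_\k\k_\nu=\td_\k\nu=t$ this gives $\td_\k\gr_\nu R_\nu=t+r$, whence $\td_\k\gr_\nu R=r+t$. (Alternatively one could invoke directly the standard formula that the transcendence degree of a graded domain equals that of the degree-zero part of its homogeneous localization plus the rank of the subgroup of $\g$ generated by its support, after identifying these with $\k_\nu$ and $\g$ in the case of $\gr_\nu R$.)

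Finally, for the last assertion: if $\gr_\nu R$ is weakly noetherian of dimension $d$, then it contains $d$ algebraically independent elements, so $d\le\td_\k\gr_\nu R=r+t$ and hence $F(l)$ is bounded above by a polynomial of degree $d\le r+t$; since, by Lemma \ref{lem2.3}, $\gr_\nu R$ also contains $r+t$ algebraically independent elements, it is weakly noetherian of dimension exactly $r+t$. I expect the only real obstacle to be the inequality $\td_\k\gr_\nu R\le r+t$: Lemma \ref{lem2.3} delivers only the opposite bound, and the extra input needed is the embedding $\gr_\nu R\hookrightarrow\gr_\nu R_\nu$ together with the one-dimensionality over $\k_\nu$ of the graded components of $\gr_\nu R_\nu$, which is what prevents $\td_\k\gr_\nu R_\nu$ (and therefore $\td_\k\gr_\nu R$) from exceeding $r+t$.
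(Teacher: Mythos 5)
Your proof is correct. The lower bound $\td_\k\gr_\nu R\ge r+t$ is, as you say, exactly Lemma \ref{lem2.3} applied to a well-chosen family of $x_i$'s and $y_j$'s, and this part coincides with the paper's. For the upper bound your route genuinely differs from the paper's. The paper stays inside $\gr_\nu R$: given $z\in R$ it chooses $l\in\N$ and $c_i\in\Z$ with $\nu(z^l)=\nu\bigl(\prod_i x_i^{c_i}\bigr)$, observes that $z^l/\prod_i x_i^{c_i}$ lies in $R_\nu^\times$ and hence has residue in $\k_\nu$ algebraic over $\k(\bar y_1\ddo\bar y_t)$, and then clears denominators to conclude that $\bar z$ is algebraic over $\k[\bar x_1\ddo\bar x_r,\bar x_1\bar y_1\ddo\bar x_1\bar y_t]$. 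You instead embed $\gr_\nu R$ into $\gr_\nu R_\nu$ (using the injectivity on graded pieces that follows from $\p_{\alpha+}=\m_\nu^{\alpha+}\cap R$) and compute $\td_\k\gr_\nu R_\nu=r+t$ directly, exploiting that each graded piece of $\gr_\nu R_\nu$ is a one-dimensional $\k_\nu$-vector space, so $\td_{\k_\nu}\gr_\nu R_\nu$ equals the rational rank of $\g$. The two arguments have the same engine: equality of values drops you into $\k_\nu$, whose transcendence degree over $\k$ is $t$, and the rational-rank contribution is $r$. But your version isolates a clean and reusable intermediate fact about $\gr_\nu R_\nu$ and avoids the explicit denominator-clearing; the paper's version is more hands-on and stays entirely within $\gr_\nu R$, which fits better with the way the algebraically independent elements $\bar x_i,\bar x_1\bar y_j$ are reused in the proof of Theorem \ref{main_th}. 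One small remark on the final ``in particular'' step: as stated, the definition of weak noetherianity does not obviously force uniqueness of the dimension, so the conclusion ``its dimension must be $r+t$'' implicitly uses that if $A$ contains $d$ algebraically independent homogeneous elements then $F(l)$ grows at least like $l^d$, hence cannot be bounded by a polynomial of degree $<d$; with that observation, your argument (and the paper's unstated one) is complete.
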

\begin{proof} We work under the assumption that $r$  and $t$ are finite and
leave the general case as an easy exercise. Let $y_1\ddo y_t$ be a maximal
set of elements of $R_{\nu}$ such that
the $\bar y_i$ are algebraically independent over $\k$. Let $x_1\in R$ be
any element of strictly positive value such that $x_1y_i\in R$ for all $i$, $1\leq  i\leq  t$. Choose
$x_2\ddo x_r$ in such a way that $\{\nu(x_i)\}_{1\leq i\leq r}$ form a basis
for $\g\otimes_{\Z}\Q$ over $\Q$. By Lemma \ref{lem2.3}, $\bar x_1\ddo\bar
x_r,\bar x_1\bar y_1\ddo
\bar x_1\bar y_t$ are algebraically independent over $\k$ in $\gr_\nu R$.
Hence,
$$
\td_\k\gr_\nu R\geq r+t.
$$
On the other hand, take any $z\in R$. By the choice of the $x_i$, there
exist $l\in\N$, $c_i\in\Z$ such that
$$
\nu\left(z^l\right)=\nu\left(\prod_{i=1}^rx_i^{c_i}\right).
$$
By the choice of the $y_i$, $\frac{\bar z^l}{\prod_{i=1}^r\bar x_i^{c_i}}$
is algebraic over $\k(\bar y_1\ddo\bar y_t)$. Writing down the algebraic
dependence relation and clearing denominators, we get that $\bar z$ is
algebraic over $\k[\bar x_1\ddo\bar x_r,\bar x_1\bar y_1\ddo\ \bar x_1\bar
y_t]$ and the proof is complete.
\end{proof}
Proof of Theorem \ref{main_th} (2)$\implies$(1). By Remark \ref{2impliesanirred} we only have to prove
that
$$
\rr\ \nu+\td_\k\nu=\dim\ R.
$$
Let
$$
d:=\rr\ \nu+\td_\k\nu
$$
and for $l\in\N$
$$
N(l):=\lt\frac R{Q_l}
$$
where $Q_l$ is as in (\ref{1.2}). By Corollary \ref{cor2.4}, $\gr_\nu R$ is weakly
noetherian of dimension $d$. Hence $N(l)$ is bounded above by a polynomial
in $l$ of degree $d$. By the linear equivalence of topologies, there exists
$r\in\N$ such that
$$
Q_{rl}\subset\m^l\qquad\text{for all }l\in\N.
$$
Hence
$$
\lt\frac R{\m^l}\le\lt\frac R{Q_{rl}},
$$
and so $\lt\frac R{\m^l}$ is bounded above by a polynomial of degree $d$ in
$l$. Therefore $\dim\ R\leq d$, hence $\dim\ R=d$ by Abhyankar's Inequality \eqref{1.1}.

\noindent(1)$\implies$(2). Let $d$ be as above. By Corollary \ref{cor2.4}, $\gr_\nu
R$ contains $d$ algebraically independent elements over $\k$. For $l\in\mathbb
N$, $\m^l\subset Q_l$, so that
$$
\lt\frac R{\m^l}\ge\lt\frac R{Q_l}.
$$
Therefore $\lt\frac R{Q_l}$ is bounded above by a polynomial in $l$ of
degree
$d$ and $\gr_\nu R$ is weakly noetherian.

We have now come to the hard part of the Theorem: proving the linear
equivalence of topologies.

Let $\wdh R$ be the $\m$-adic completion of $R$. Since $R$ is analytically
irreducible and $\h$ archimedian, there exists a unique extension $\wdh\nu$
of $\nu$ to $\wdh R$ (see \cite{25}, pp. 63--64). Moreover, $\wdh\nu$ has the same
value group as $\nu$ and $\frac{R_{\wdh\nu}}{\m_{\wdh\nu}}=\frac{R_{\nu}}{\m_{\nu}}$.
Since
$$
\m^n\wdh R=(\m\wdh R)^n,
$$
it is sufficient to prove that the $\wdh\nu$-adic and the $\m\wdh R$-adic
topologies are linearly equivalent in $\wdh R$. Thus we may assume that $R$
is complete.
\vskip0.1in\noindent
\textit{Claim}. There exists a system of parameters $(x_1\ddo x_d)$ of $R$ such
that $\bar x_1\ddo\bar x_d$ are algebraically independent in $\gr_\nu R$
over
$\k$, and if char$(R)=0$ and char$(\k)=p>0$ we have $x_1=p$.
\vskip0.1in\noindent
\begin{proof}[Proof of Claim] We construct the $x_i$ recursively. First of all we choose any non-zero element $x_1$ in $\m$ except in the case char$(R)=0$ and char$(\k)=p>0$ where we choose $x_1:=p$. Assume that we already constructed elements
$$
x_1\ddo x_i\in\m
$$
such that $\bar x_1\ddo\bar x_i$ are algebraically independent over $\k$ and $\he(x_1\ddo x_i)=i<d$. By Corollary \ref{cor2.4} there exists  $y\in R$ such that $\bar y$
is transcendental over $\k[\bar x_1\ddo\bar x_i]$ in $\gr_\nu R$. Let $P_1\ddo P_s$ denote the minimal prime ideals of 
$(x_1\ddo x_i)R$. Renumbering the $P_j$, if necessary, we may assume that there exists $j\in\{0\ddo s\}$ such that $y\in P_l$, 
$1\leq l\leq j$ and $y\notin P_l$, $j<l\leq s$.

Take any $z\in\bigcap\limits_{l=j+1}^sP_l\setminus\bigcup\limits_{l=1}^jP_l$ (where we take $\bigcup\limits_{l=1}^jP_l=\emptyset$ if 
$j=0$ and $\bigcap\limits_{l=j+1}^sP_l=R$ if $j=s$), and let
$$
x_{i+1}:=y+z^N,
$$
where $N$ is an integer such that $N\nu(z)>\nu(y)$. We have constructed elements
$$
x_1\ddo x_i,x_{i+1}\in\m
$$
such that $\bar x_1\ddo\bar x_{i+1}$ are algebraically independent over $\K$ and
$$
\he(x_1\ddo x_{i+1})=i+1.
$$
For $i=d$ we obtain the desired system of parameters $(x_1\ddo x_d)$. The Claim is
proved.\end{proof}
\vskip0.1in
If $R$ is not equicharacteristic, it contains a complete
non-equicharacteristic Dedekind domain $W$ (cf. \cite{9}, Theorem 84) whose maximal ideal is generated by $p=x_1$. Since $R$ is an integral domain, $R$ is
finite over $\k[[x_1\ddo x_d]]$ or $W[[x_2\ddo x_d]]$, depending on whether
$R$ is equicharacteristic or not (cf. \cite{9}, Theorem 84). Let
$$
S:=\k[[x_1\ddo x_d]]\quad\text{or}\quad W[[x_2\ddo x_d]],
$$
depending on which of the two cases we are in. Let $\L$  denote the field of
fractions of $S$ and $\m_0:=\m\cap S$. Let $t_1\ddo t_n$ be a system of
generators of $R$ as an $S$-algebra. Let $T_1\ddo T_n$ be independent
variables and write
$$
R=\frac{S[T_1\ddo T_n]}\p,
$$
where $\p$ is the kernel of the natural map $S[T]\rightarrow R$, given by
$T_i\rightarrow t_i$, $1\leq i\leq n$. Let
$$
\tilde S=S\left[\frac{x_2}{x_1}\ddo\frac{x_d}{x_1}\right]
$$
and let $\bar S$ be the localization of $\tilde S$ at the prime ideal
$(x_1)\tilde S$:
$$
\bar S=\tilde S_{(x_1)}.
$$
The ring $\bar S$ is the local ring of the generic point of the exceptional divisor
of the blowing-up of $\s\ S$ at $\m_0$. Let $\bar R:=R\left[\frac{x_2}{x_1}\ddo\frac{x_d}{x_1}\right]$.
Then $\bar R$ is a semi-local integral domain, finite over $\bar S$ and
birational to $R$. Let $\q_0:=\m_0\bar S=(x_1)\bar S$; $\q:=\m\bar R$.

Let $\nu_0$ denote the restriction of $\nu$ to $\L$.
\begin{lemma}\label{lemma2.5}Let $(S,\m_0,\k)$ be a regular local ring with regular system of parameters
$$
x=(x_1\ddo x_d)
$$
and field of fractions $\L$. Let $\nu_0$ be a valuation of $\L$,  centered at $S$, such that $\bar x_1\ddo\bar x_d$ are algebraically independent in $\gr_{\nu_0}S$ over $\k$. Then the valuation $\nu_0$ is monomial with respect to $x$.
\end{lemma}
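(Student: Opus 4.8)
The statement is essentially a rigidity result: the condition that $\bar x_1,\dots,\bar x_d$ be algebraically independent in $\gr_{\nu_0}S$ forces $\nu_0$ to be the monomial valuation with weights $\nu_0(x_1),\dots,\nu_0(x_d)$. The plan is to compare, for each degree $l$, the $\k$-vector space $S_l/\p_{l+}$ with the span of the images of monomials $x^\alpha$ with $\nu_0(x^\alpha)=l$. Write $\omega_i:=\nu_0(x_i)$ and, for $\alpha\in\N_0^d$, $\langle\omega,\alpha\rangle:=\sum\omega_i\alpha_i$.

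First I would observe that because $S$ is a power series ring, every element $f\in\p_\alpha$ can be written as $f=\sum_\beta c_\beta x^\beta$ with $c_\beta\in\k$ (after, if necessary, passing to the completion, which does not change the $\nu_0$-ideals since $S$ is already complete and $\nu_0$ is centered). Grouping the monomials by $\nu_0$-value, write $f=\sum_\gamma f_\gamma$ where $f_\gamma=\sum_{\langle\omega,\beta\rangle=\gamma}c_\beta x^\beta$ is the "$\nu_0$-homogeneous part of value $\gamma$''. The key claim is that $\nu_0(f)=\min\{\gamma\mid f_\gamma\neq 0\}$, i.e. there is never any cancellation that raises the value. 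This is precisely where the algebraic independence hypothesis is used: if we had $\nu_0(f)>\min\{\gamma\mid f_\gamma\neq 0\}=:\gamma_0$, then the nonzero polynomial $f_{\gamma_0}=\sum_{\langle\omega,\beta\rangle=\gamma_0}c_\beta x^\beta$ in $S$ would have $\bar f_{\gamma_0}=0$ in $\gr_{\nu_0}S$, since its image lives in $\p_{\gamma_0}/\p_{\gamma_0+}$ and is killed there. But $\bar f_{\gamma_0}=\sum c_\beta\bar x^\beta$ is then a nontrivial algebraic (indeed polynomial) relation among $\bar x_1,\dots,\bar x_d$ over $\k$, contradicting the hypothesis. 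Hence no cancellation occurs.

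Once this is established, the monomiality of $\nu_0$ follows quickly. For any $\alpha\in\g$, the ideal $\p_\alpha$ consists exactly of the $f$ all of whose monomials $x^\beta$ satisfy $\langle\omega,\beta\rangle\geq\alpha$: indeed $\nu_0(f)=\min_\beta\langle\omega,\beta\rangle$ over the monomials appearing in $f$, by the no-cancellation claim, so $\nu_0(f)\geq\alpha$ iff every such $\beta$ has $\langle\omega,\beta\rangle\geq\alpha$. Therefore $\p_\alpha$ is generated (as an ideal, or topologically) by the monomials $\{x^\beta:\langle\omega,\beta\rangle\geq\alpha\}$, which is the definition of $\nu_0$ being monomial with respect to $x$. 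Conversely one checks trivially that the assignment $x^\beta\mapsto\langle\omega,\beta\rangle$, extended by taking minima over supports, is a well-defined valuation on $\L$, so $\nu_0$ coincides with it.

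The main obstacle is the no-cancellation claim, and within it the technical point of reducing an arbitrary element of $\p_\alpha$ to a (possibly infinite) $\k$-linear combination of monomials and making sense of its $\nu_0$-homogeneous decomposition: one must be careful that $f_{\gamma_0}$ is a genuine \emph{polynomial} (finitely many monomials of a given $\nu_0$-value $\gamma_0$), which holds because $\{\beta\in\N_0^d:\langle\omega,\beta\rangle=\gamma_0\}$ is finite — and here one uses that all $\omega_i>0$, which follows from $\nu_0$ being centered at $S$ so that $\nu_0(x_i)>0$ for all $i$. With $f_{\gamma_0}$ a genuine polynomial, its image $\bar f_{\gamma_0}\in\gr_{\nu_0}S$ is a genuine polynomial expression in the $\bar x_i$, and the contradiction with algebraic independence is clean. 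Everything else is bookkeeping.
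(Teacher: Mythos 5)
Your core idea --- that algebraic independence of $\bar x_1,\dots,\bar x_d$ in $\gr_{\nu_0}S$ rules out cancellation, so $\nu_0(f)$ is the minimum of $\nu_0$ over the monomials appearing in $f$, from which monomiality of all $\nu_0$-ideals follows --- is exactly the paper's idea. But the way you set it up contains a real gap.

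You reduce to the case $S=\k[[x_1,\dots,x_d]]$ and expand $f=\sum_\beta c_\beta x^\beta$ with $c_\beta\in\k$, i.e.\ you use a coefficient field. The lemma however is stated for an arbitrary regular local ring $(S,\m_0,\k)$, and in the application inside the proof of Theorem~\ref{main_th} the relevant ring is $S=W[[x_2,\dots,x_d]]$ with $x_1=p$ and $W$ a complete DVR of mixed characteristic. There $\k$ does not embed in $S$, there is no expansion $f=\sum c_\beta x^\beta$ with $c_\beta\in\k$, and the ``$\nu_0$-homogeneous part $f_{\gamma_0}$'' is not literally a $\k$-polynomial in the $x_i$; so the contradiction with algebraic independence over $\k$ is not available as you wrote it. Your parenthetical ``pass to the completion, which does not change the $\nu_0$-ideals since $S$ is already complete'' also presumes completeness rather than proving anything, and glosses over whether $\nu_0$ extends to $\hat S$ compatibly.

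The paper avoids both issues by using a different and finer normal form. For $f\in S$ it takes $Mon(f)=\{\omega_1,\dots,\omega_s\}$, the unique finite set of monomials, none dividing another, such that $f=\sum_{i=1}^s c_i\omega_i$ with each $c_i$ a \emph{unit} of $S$. This decomposition exists in any regular local ring with a fixed regular system of parameters, in particular in mixed characteristic, and it is finite (so no limiting argument over the completion is needed). Passing to $\gr_{\nu_0}S$, the unit $c_i$ becomes the nonzero residue $\bar c_i\in\k^*$, and if $\nu_0(f)>\min_i\nu_0(\omega_i)=\gamma_0$ then $\sum_{\nu_0(\omega_i)=\gamma_0}\bar c_i\,\bar\omega_i=0$, a genuine nontrivial $\k$-polynomial relation among the $\bar x_j$, which is the desired contradiction. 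Your argument becomes correct if you replace the power-series expansion with coefficients in $\k$ by this unit-coefficient monomial decomposition; as written it covers only the equicharacteristic complete case and leaves the mixed-characteristic case, which the paper explicitly needs, unproved.
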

\begin{proof} In what follows, "monomial'' will mean a monomial in $x$ and "monomial ideal'' - an ideal, generated by monomials in $x$. For an element $f\in S$, let $M(f)$ denote the smallest (in the sense of inclusion) monomial ideal of $S$, containing $f$. The ideal $M(f)$ is well defined: it is nothing but the intersection of all the monomial ideals containing $f$. Let $Mon(f)$ denote the minimal set of monomials generating $I(f)$. In other words, $Mon(f)$ is the smallest  set of monomials in $x$ such that $f$ belongs to the ideal of $S$ generated by $Mon(f)$. The set $Mon(f)$ can also be characterized as follows. It is the unique set $\{\omega_1\ddo\omega_s\}$ of monomials, none of which divide each other and such that $f$ can be written as
\begin{equation}\label{2.3}
f=\sum\limits_{i=1}^sc_i\omega_i\quad\text{ with $c_i$ units of }S.
\end{equation}
A key point is the following: since $\bar x_1\ddo\bar x_n$ are algebraically independent in $\gr_{\nu_0}S$ by assumption, (\ref{2.3}) implies that
\begin{equation}\label{2.4}
\nu_0(f)=\min\limits_{1\leq i\leq s}\{\nu(\omega_i)\}.\end{equation}

If $I$ is an ideal of $S$, let $M(I)$ denote the smallest monomial ideal, containing $I$. The ideal $M(I)$ is generated by the set 
$\{Mon(f)\ |\ f\in I\}$.

We want to prove that all the $\nu_0$-ideals of $S$ are monomial. Let $I$ be a $\nu_0$-ideal of $S$. It is sufficient to prove that $I=M(I)$. Obviously, $I\subset M(I)$. To prove the opposite inclusion, take a monomial $\omega\in M(I)$. By the above, there exists $f\in I$ and $\omega'\in Mon(f)$ such that
\begin{equation}\label{2.5}
\left.\omega'\ \right|\ \omega.\end{equation}

By (\ref{2.4}), we have $\nu_0(\omega')\ge\nu_0(f)$. Since $I$ is a $\nu_0$-ideal, this implies that $\omega'\in I$. Hence $\omega\in I$ by (\ref{2.5}). This completes the proof.
\end{proof}
\begin{lemma}\label{lemma2.6}  Let $(S,\m_0,k)$ be a regular local ring with regular system of parameters
$$
x=(x_1\ddo x_d)
$$
and field of fractions $\L$. Let $\nu_0$ be a monomial valuation of  $\L$,  centered at $S$, such that the semigroup 
$\nu_0(S\setminus\{0\})$ is archimedian. Then the $\nu_0$-adic topology on $S$ is linearly equivalent to the $\m_0$-adic topology.
\end{lemma}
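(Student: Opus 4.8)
The plan is to use the monomiality of $\nu_0$ to reduce the required inclusion $Q_{rl}\subset\m_0^l$ to a one-line estimate on exponent vectors of monomials, the constant $r$ being extracted directly from the archimedian hypothesis. Write $a_i:=\nu_0(x_i)>0$ for $1\le i\le d$ and $m:=\nu_0(\m_0)$. First I would observe that $m=\min\{a_1\ddo a_d\}$: every element of $\m_0$ is an $S$-linear combination of $x_1\ddo x_d$, hence has $\nu_0$-value $\ge\min_i a_i$, and the value $\min_i a_i$ is attained at the corresponding $x_i$. Set $\h_0:=\nu_0(S\setminus\{0\})$; since $\h_0$ is archimedian and $0\ne m\in\h_0$, for each $j$ there is $r_j\in\N$ with $r_jm>a_j$, and putting $r:=\max_j r_j$ we get $a_j\le rm$ for all $j$. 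This $r$ will be the constant witnessing linear equivalence.

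I would then show that $Q_{rl}\subset\m_0^l$ for all $l\in\N$; together with the automatic inclusion $\m_0^l\subset Q_l$, this is exactly the linear equivalence of the $\nu_0$-adic and $\m_0$-adic topologies. Since $\nu_0$ is monomial with respect to $x$, the $\nu_0$-ideal $Q_{rl}$ is generated by the monomials it contains, that is, by all $x^\beta$ with $\nu_0(x^\beta)\ge rlm$; as $\m_0^l$ is an ideal, it suffices to check that each such $x^\beta$ lies in $\m_0^l$, i.e. that $|\beta|\ge l$. For such a $\beta$ we have $rlm\le\nu_0(x^\beta)=\sum_{i=1}^d\beta_i a_i\le\sum_{i=1}^d\beta_i(rm)=|\beta|\,rm$, and since $rm$ is a positive element of the value group this forces $l\le|\beta|$, which is what we want.

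I do not expect a genuine obstacle here: the whole point is to notice that archimedianity of $\h_0$ is precisely what allows one to dominate every $a_j$ by a single integer multiple of $m=\nu_0(\m_0)$, after which the inclusion follows from the elementary exponent count above. The only steps that call for a word of care are the identity $\nu_0(\m_0)=\min_i\nu_0(x_i)$, the standard facts that a monomial ideal is generated by the monomials it contains and that $x^\beta\in\m_0^l=(x_1\ddo x_d)^l$ iff $|\beta|\ge l$, and the cancellation of the positive element $rm$ in the ordered group $\g$ — all routine.
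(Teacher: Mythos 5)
Your proof is correct and follows essentially the same route as the paper's: extract a single integer $r$ (the paper's $N$) from archimedianity so that every $\nu_0(x_j)$ is bounded by $r\nu_0(\m_0)$, then use monomiality of $\nu_0$ to reduce $Q_{rl}\subset\m_0^l$ to the exponent estimate $\sum_i\beta_i\nu_0(x_i)\ge rl\,\nu_0(\m_0)\implies|\beta|\ge l$. The only difference is cosmetic — the paper sorts the $x_i$ and uses $\nu_0(x_d)\le N\nu_0(x_1)$, whereas you take a max over the $r_j$ — and you are also more explicit about the details the paper leaves unstated (that $\nu_0(\m_0)=\min_i\nu_0(x_i)$, and that $Q_{rl}$ is generated by its monomials).
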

\begin{proof} Renumbering the $x_i$, if necessary, we may assume that $\nu_0(x_1)\le...\le\nu_0(x_d)$. Since $\nu_0(S\setminus\{0\})$ is archimedian, there exists a natural number $N$ such that $\nu_0(x_d)\leq N\nu_0(x_1)$. For $l\in\N$, let $Q_l$ denote the $\nu_0$-ideal
\begin{equation}\label{2.6}
Q_l:=\{x\in S\mid\nu_0(x)\geq l\nu_0(\m_0)\}.\end{equation}

Then for all $l\in\N$ we have $Q_{Nl}\subset\m_0^l\subset Q_l$. This completes the proof of the Lemma.
\end{proof}
Next, $R'$ be the normalization of $R$. Since $R$ is complete, $R'$ is a local ring \cite{11},
(37.9). Let $\m'$ be the maximal ideal of $R'$. If the $\m'$-adic
topology in $R'$ is linearly equivalent to the $\nu$-adic one then the same
is true for the $\m$-adic topology in $R$. Hence we may assume that $R$ is
normal.

Let $\K_1$ be a finite extension of $\K$ which is normal over $\L$  (in the sense of field theory).
Then there exists a valuation $\nu_1$ of $\K_1$ whose restriction to $\K$ is
$\nu$ \cite{26}, Chapter VI,  4,6,11. Let $R_1$ be the integral closure
of $R$ in $\K_1$. Then $R_1$ is a product of complete local rings and since it is an integral domain (it is a subring of $\K_1$) it is a complete local ring. Then $\nu_1$ is centered in the maximal ideal $\m_1$ of
$R_1$. We have $\dim(R_1)=\dim R$, $\rr\ \nu_1=\rr\ \nu$ and
$\left[\frac{R_{\nu_1}}{\m_{\nu_1}}:\frac{R_{\nu}}{\m_{\nu}}\right]<\infty$. The ring $R_1$ is
analytically irreducible by \cite{11}, (37.8). Finally, since
$R_1$ is algebraic over $R$, for any $x\in R_1$ there
exist $n\in\N$ and $y\in R$ such that $n\nu_1(x)\geq\nu_1(y)=\nu(y)$.
Hence $\nu_1$ is archimedian on $R_1$. Therefore $\nu_1$ satisfies (1) of
Theorem \ref{main_th}. To prove that the $\m$-adic topology on $R$  is linearly equivalent to the $\nu$-adic one, it is sufficient to prove that the same is true of the $\m_1$-adic topology on $R_1$. Thus we may assume that the field extension $\L\hookrightarrow \K$ is normal. Let $p=\ch\ \K$ if $\ch\ \K>0$ and $p=1$ otherwise. Let
$p^n$ denote the inseparability degree of $\K$ over $\L$.

By Lemmas \ref{lemma2.5}--\ref{lemma2.6} the $\nu_0$-adic topology on $S$ is linearly equivalent to the $\m_0$-adic topology.
Now, let $f\in\m$. Since $R$  is assumed to be integrally closed in $\K$, it equals the integral closure of $S$ in $\K$. Therefore $R$  is mapped to itself by all the automorphisms of $\K$ over $\L$. In particular, for every $\sigma\in Aut(\K/\L)$ we have 
$\nu(\sigma f)>0$. Then
$$
\nu(f)<\nu\left(\prod_{\sigma\in Aut(\K/\L)}\sigma f^{p^n}\right)=\nu(N_{\K/\L}(f))=\nu_0(N_{\K/\L}(f)).
$$
By the linear equivalence of topologies on $S$, there exists $r\in\N$,
such that
$$
\nu_0(N_{\K/\L}(f))\leq r\m_0(N_{\K/\L}(f)).
$$
Now Theorem \ref{main_th} follows from the next Lemma (we state Lemma \ref{lemma_norm} in greater generality than is necessary for Theorem \ref{main_th} for future reference).
\begin{lemma}\label{lemma_norm} Let $S\subset R$ be two noetherian domains with fields
of fractions $\L$  and $\K$, respectively. Let $\m$ be a maximal ideal of $R$
and $\m_0:=\m\cap S$. Assume that $S_{\m_0}$ and $R_\m$ are analytically
irreducible and that $R_\m$ is finite over $S_{\m_0}$. Assume that at least one of the following conditions holds:
\begin{enumerate}
\item for any $f\in R$, $N_{\K/\L}(f)\in S$.
\item The $\m_0$-adic order on $S$ is a valuation
\end{enumerate}
(so that the expression $\m_0(N_{\K/\L}(f))$ makes sense).
Then there exists $r\in\N$ such that for
any $f\in R$
$$
\m_0(N_{\K/\L}(f))\leq r\m(f).
$$
\end{lemma}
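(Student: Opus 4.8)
The plan is to push the $\m_0$-adic comparison down to a \emph{divisorial} valuation $\nu_0$ of $\L$, extend $\nu_0$ to $\K$, apply Izumi's Theorem (Theorem~\ref{Izumi}) to the extensions upstairs, and then carry the resulting inequality back down through the classical formula for the valuation of a norm.

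First I would reduce to the case where $(S,\m_0)$ and $(R,\m)$ are both local. This is harmless: for $f\in R$ the $\m$-adic order of $f$ in $R$ equals its $\m R_\m$-adic order in $R_\m$ because $\m$ is maximal (so $\m^nR_\m\cap R=\m^n$), the $\m_0$-adic order of an element of $S$ is at most its $\m_0S_{\m_0}$-adic order in $S_{\m_0}$, the norm $N_{\K/\L}$ is unchanged, and conditions (1) and (2) persist after localization (for instance $N_{\K/\L}(a/s)=N_{\K/\L}(a)/s^{[\K:\L]}\in S_{\m_0}$ for $a\in R$, $s\in S\setminus\m_0$). So assume $S$ and $R$ are local. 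Now take for $\nu_0$ one of the Rees valuations of the maximal ideal $\m_0$ of $S$; in case (2) the $\m_0$-adic order is itself a valuation, hence equals the unique Rees valuation of $\m_0$, and we take this as $\nu_0$. By Theorem~\ref{Rees_val} and the remark following it, $\nu_0$ is a divisorial valuation of $\L$ centered at $S$, $\nu_0(\m_0)$ is a positive integer, and, since $\nu_0$ occurs among the valuations computing $\overline{\m_0}$,
$$
\m_0(g)\ \le\ \overline{\m_0}(g)\ \le\ \frac{\nu_0(g)}{\nu_0(\m_0)}\qquad\text{for all }g\in S\setminus\{0\};
$$
in case (2) this is an equality valid on all of $\L^{*}$, which is precisely what makes $\m_0\!\left(N_{\K/\L}(f)\right)$ meaningful when $N_{\K/\L}(f)\notin S$.

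Next I would analyse the (finitely many) extensions $\nu^{(1)},\dots,\nu^{(h)}$ of $\nu_0$ to $\K$ and show that each is a divisorial valuation centered at $R$. Since $R$ is integral over $S\subseteq R_{\nu_0}$ we have $\nu^{(k)}\ge 0$ on $R$, hence $\nu^{(k)}(f)\ge 0$ for all $f\in R$, and the center of $\nu^{(k)}$ in $R$ lies over the center $\m_0$ of $\nu_0$ in $S$; being the contraction, along an integral extension of local domains, of a valuation centered at a maximal ideal, this center is maximal, hence equal to $\m$. Because $[\K:\L]<\infty$, the fundamental inequality bounds the ramification index and residue degree of each $\nu^{(k)}$ over $\nu_0$: the value group of $\nu^{(k)}$ has finite index over $\g_{\nu_0}\cong\Z$, hence is again $\cong\Z$, and the residue field of $\nu^{(k)}$ is finite over $\k_{\nu_0}$, hence has the same transcendence degree over $\k$; using $\dim R=\dim S$ we conclude $\td_\k\nu^{(k)}=\dim R-1$, so $\nu^{(k)}$ is divisorial centered at $R$. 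By Theorem~\ref{Izumi} applied in the analytically irreducible local domain $R$, for each $k$ there is a positive integer $c_k$ with $\nu^{(k)}(f)\le c_k\,\m(f)$ for all $f\in R$.

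Finally I would combine these estimates with the formula for the valuation of a norm (see \cite{26}, Ch.~VI): normalizing the $\nu^{(k)}$ to restrict to $\nu_0$ on $\L$, one has $\nu_0\!\left(N_{\K/\L}(f)\right)=\sum_{k=1}^{h}d_k\,\nu^{(k)}(f)$ with strictly positive integers $d_k$ (and $\sum_k d_k=[\K:\L]$), obtained by writing $N_{\K/\L}(f)$ as a product of conjugates of $f$ after extending $\nu_0$ to an algebraic closure of $\L$. Applying the comparison of the second paragraph (in case (2), the equality) to $g=N_{\K/\L}(f)$, one gets for every $f\in R\setminus\{0\}$
$$
\m_0\!\left(N_{\K/\L}(f)\right)\ \le\ \frac{\nu_0\!\left(N_{\K/\L}(f)\right)}{\nu_0(\m_0)}\ =\ \frac{1}{\nu_0(\m_0)}\sum_{k=1}^{h}d_k\,\nu^{(k)}(f)\ \le\ \frac{\sum_k d_kc_k}{\nu_0(\m_0)}\ \m(f),
$$
so the Lemma holds with $r=\lceil(\sum_k d_kc_k)/\nu_0(\m_0)\rceil$, the case $f=0$ being trivial. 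The step I expect to be the main obstacle is the middle one: checking that the extensions $\nu^{(k)}$ of $\nu_0$ are again divisorial, so that Izumi's Theorem is available for them in $R$, together with pinning down the exact shape of the norm formula. Both rest on the finiteness of $[\K:\L]$ controlling simultaneously the ramification indices and residue degrees of the $\nu^{(k)}$ over $\nu_0$; had $\nu_0$ merely been a valuation rather than a divisorial one, this mechanism would break down.
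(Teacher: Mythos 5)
Your proposal is correct, but takes a genuinely different route from the paper. The paper's proof first reduces to $R$, $S$ complete local with $R$ integrally closed in $\K$ and $\L\hookrightarrow\K$ normal, then handles the case $\K/\L$ Galois, $S$ regular by an explicit blowup construction: it forms $\bar S=\tilde S_{(x_1)}$, $\bar R=R[x_2/x_1\ddo x_d/x_1]$, the normalization $\bar R'$, reads off the divisorial valuations $\nu_i$ at the maximal ideals of $\bar R'$, and plays the identity $\q_0(f)=\m_0(f)$ (valid because a regular $S$ is a UFD) against the Galois product decomposition of the norm, invoking Izumi's Theorem for the $\nu_i$. It then strips away separability by passing to $R_s=R\cap\K_s$ and Frobenius powers, and finally strips away regularity of $S$ by the multiplicativity of the norm. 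Your argument replaces all of this machinery by a single clean mechanism: take $\nu_0$ a Rees valuation of $\m_0S_{\m_0}$ (the unique one, equal to the $\m_0$-adic order, under hypothesis (2)); use $\m_0(g)\le\bar\m_0(g)\le\nu_0(g)/\nu_0(\m_0)$ from the Rees representation; extend $\nu_0$ to the finitely many $\nu^{(k)}$ on $\K$, all divisorial centered at $\m$ because $[\K:\L]<\infty$ bounds the ramification indices (so the value group stays rank-one discrete) and the residue degrees (so the residue transcendence degree is unchanged, and $\dim R_\m=\dim S_{\m_0}$ closes the count); apply Izumi's theorem in the analytically irreducible $R_\m$ to each $\nu^{(k)}$; and convert back via $\nu_0(N_{\K/\L}(f))=\sum_k d_k\nu^{(k)}(f)$ with positive integer weights $d_k=p^a g_k$ summing to $[\K:\L]$, obtained by spreading the norm over all $\L$-embeddings $\K\to\bar\L$ after extending $\nu_0$ to $\bar\L$. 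What the paper's path buys is self-containment (it never leaves the ring-theoretic setting of explicit blowups and uses only Galois theory for normal extensions already obtained by the reduction); what yours buys is uniformity — the inseparable and nonregular cases need no separate treatment because the extension-of-valuations formalism absorbs them — and it dispenses with the reduction to $R$ complete and normal altogether. Two side remarks: the norm identity you invoke does require the classical fact that every extension of $\nu_0$ arises as $\bar\nu_0\circ\sigma$ for some embedding $\sigma$ (conjugacy of extensions in a normal extension, \cite{26} Ch.~VI), so each $d_k$ is genuinely strictly positive even in the presence of defect; and your ``reduction to local'' is really being used only through $\nu_0$ being computed in $S_{\m_0}$ and Izumi being applied in $R_\m$ — conditions (1)/(2) are invoked only for $f\in R$ itself, so the worry about whether $N_{\K/\L}(R_\m)\subset S_{\m_0}$ never actually arises in your chain of inequalities, and it would be cleaner to phrase things so as to make this explicit.
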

\begin{proof} Arguing as above, we reduce the problem to the case when $R$  and $S$ are complete local rings, $R$  is integrally closed in $\K$ and the field extension $\L\hookrightarrow\K$ is normal. We will work under all these assumptions from now on.

First, we prove the equivalence of topologies under the additional assumptions that $\K$ is separable (hence Galois) over $\L$  and $S$ is regular. We fix a regular system of parameters $x=(x_1\ddo x_d)$ of $S$. Let $G:=Gal(\K/\L)$. Then $G$ acts on $R$.

Let $\bar R$ be as above and let $\bar R'$ denote the integral closure of $\bar R$
in $\K$. Then $\bar R'$ is a 1-dimensional semi-local ring, finite over
$\bar S$. Let
$\m_1$,..., $\m_s$ denote the maximal ideals of $\bar R'$. Write
$$
\q\bar R'=\bigcap_{i=1}^s\m_i^{k_i}=\prod_{i=1}^s\m_i^{k_i}.
$$
Then
\begin{equation}\label{2.7}
\q^n\bar R'=\bigcap_{i=1}^s\m_i^{nk_i}=\prod_{i=1}^s\m_i^{nk_i}.\end{equation}

Each $\m_i$ defines a divisorial valuation of $\K$, centered in $R$. We
denote it by $\nu_i$. The group $G$ acts on $\bar R'$ and permutes the $\m_i$. By
Theorem \ref{Izumi}, all the $\nu_i$ are linearly comparable in $R$ to the $\m$-adic
pseudo-valuation of $R$. Hence there exists $r\in\N$ such that for any
$i$, $1\leq i\leq s$ and any $f\in R$,
\begin{equation}\label{2.8}
\nu_i(f)\leq r\m(f).
\end{equation}
Since $S$ is a UFD, $q_0(f)=\m_0(f)$ for all $f\in S$. Now, take any $f\in
R$. Without loss of generality, assume that
$$
\nu_1(f)=\max_{1\leq i\leq s}\nu_i(f).
$$
Finally, let $l\in\N$ be such that $\q^l\cap S\subset\q_0$. Then for
any $g\in S$, $\q_0(g)\leq l\q(g)$. We have

\begin{align}
\m_0(N_{\K/\L}(f))&=\q_0(N_{\K/\L}(f))\leq l\q(N_{\K/\L}(f))\leq\\
&\leq l\max_{1\leq i\leq s}k_i\max_{1\leq i\leq s}\nu_i(N_{\K/\L}(f))\\
&\leq l\max_{1\leq i\leq s}k_i\max_{1\leq i\leq s}\prod_{\sigma\in
G}\nu_i(\sigma f)\\
&\leq l\max_{1\leq i\leq s}k_i[\K:\L]\nu_1(f)\leq rl\max_{1\leq i\leq
s}k_i[\K:\L]\m(f).
\end{align}

This proves Lemma \ref{lemma_norm} in the case when $\K$ is Galois over $\L$  and $S$ is regular.

Continue to assume that $S$ is regular, but drop the assumption of separability of $\K$ over $\L$.
Let $\K_s$ denote the maximal separable extension of $\L$  in $\K$. Let
$R_s:=R\cap\K_s$. Suppose $\ch\ \K=p>0$. Then there exists $n\in\N$ such
that $R^{p^n}\subset R_s$. Let $\m_s$ denote the maximal ideal of $R_s$.
Since $\m_s\subset\m$, for any $g\in R_s$ we have $\m_s(g)\leq\m(g)$. By the separable case there exists $r\in\N$ such that for any $f\in R_s$
$$
\nu(f)\leq r\m_s(f).
$$
Then for any $f\in R$,

\begin{align}
\nu(f)&=\frac{1}{p^n}\nu(f^{p^n})\leq\frac{r}{p^n}\m_s(f^{p^n})\\
&\le\frac{r}{p^n}\m(f^{p^n})\leq r\m(f).
\end{align}

Hence $\nu(f)$ and $\m(f)$ are linearly comparable, as desired. This proves Lemma \ref{lemma_norm} assuming $S$ is regular.

Finally, drop the assumption that $S$ is regular. There exists a complete regular local ring $T\subset S$
such that $S$ is finite over $T$. Since Lemma \ref{lemma_norm} is already known for $T$, it is also true for $S$ by the multiplicativity of the norm. This proves Lemma \ref{lemma_norm} and with it Theorem \ref{main_th}.
\end{proof}


\section{Applications}\label{S3}

The rest of the paper is devoted to the applications of Izumi's Theorem and of Lemma \ref{lemma_norm}. The first
application is to rewrite some of the classical theorems on comparison of
topologies in noetherian rings (which were traditionally proved by
Chevalley lemma) to include linear equivalence of topologies. \\

The following observation will be useful in the sequel.
\begin{lemma}\label{lemma_loc} Let $R$  be a noetherian ring, $\m$ a maximal ideal of $R$. Let $\phi:R\rightarrow R_{\m}$ denote the
localization homomorphism. Then
$$
\phi^{-1}(\m^nR_\m)=\m^n+\operatorname{Ker}\phi=\m^n.
$$
In other words, the symbolic powers of $\m$ coincide with the usual powers. In particular, the $\m$-adic topology on $R$  coincides with the restriction of the $\m^nR_\m$-adic topology to $R$.
\end{lemma}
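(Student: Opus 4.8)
The plan is to prove the two displayed equalities by a direct set‑theoretic computation, the only substantive point being that $\m^n$ is $\m$-primary because $\m$ is \emph{maximal} (and not merely prime). The inclusions $\m^n\subseteq\m^n+\Ker\phi\subseteq\phi^{-1}(\m^nR_\m)$ are immediate, since $\phi(\m^n)\subseteq\m^nR_\m$ and $\phi(\Ker\phi)=0$; so everything reduces to showing $\phi^{-1}(\m^nR_\m)\subseteq\m^n$.

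First I would take $x\in R$ with $\phi(x)\in\m^nR_\m$. Using that localization is exact and that $\m^nR_\m=(\m R_\m)^n$, this means there is $s\in R\setminus\m$ with $sx\in\m^n$. Then I would pass to the quotient ring $R/\m^n$: since $\m$ is maximal, $\sqrt{\m^n}=\m$, so $\m$ is the only prime of $R$ containing $\m^n$, and hence $R/\m^n$ is a local ring with maximal ideal $\m/\m^n$. Consequently the image $\bar s$ of $s$ in $R/\m^n$ lies outside the maximal ideal and is therefore a unit. From $\bar s\,\bar x=\overline{sx}=0$ one concludes $\bar x=0$, i.e. $x\in\m^n$, which is exactly the inclusion needed; this also forces $\m^n+\Ker\phi=\m^n$, so all three sets coincide.

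For the final assertion, since $\phi^{-1}\bigl((\m R_\m)^n\bigr)=\phi^{-1}(\m^nR_\m)=\m^n$ for every $n\in\N$, the preimages under $\phi$ of a fundamental system of neighbourhoods of $0$ for the $\m R_\m$-adic topology on $R_\m$ are exactly the ideals $\m^n$, which is precisely the statement that the $\m$-adic topology on $R$ is the topology induced from $R_\m$. I do not anticipate any genuine obstacle here: the argument is entirely formal once one records the identification $\m^nR_\m=(\m R_\m)^n$ and the observation that an element of $R\setminus\m$ becomes a unit modulo $\m^n$ — and it is worth noting that this last point is where maximality of $\m$ is essential, since for a non‑maximal prime the analogous statement is false and the symbolic powers genuinely differ from the ordinary ones.
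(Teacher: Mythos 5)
Your proposal is correct and takes essentially the same approach as the paper: both reduce to showing that an element $s\in R\setminus\m$ becomes a unit modulo $\m^n$ because $\m$ is maximal, the paper by producing an explicit inverse from the comaximality $(s)+\m^n=R$, and you by observing that $R/\m^n$ is local with maximal ideal $\m/\m^n$; these are the same observation in two dialects.
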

\begin{proof} Consider an element
$$
x\in\phi^{-1}(\m^nR_\m).
$$
Then there exists $u\in R\setminus\m$ such that $ux\in\m^n$. For every natural number $n$, we have $(u)+\m^n=R$. Then there exist 
$v_n\in R$, $m_n\in\m^n$ such that $uv_n+m_n=1$. We have $x=x\cdot1=x(uv_n+m_n)=xuv_n+xm_n\in\m^n$. This proves that 
$\phi^{-1}(\m^nR_\m)\subset\m^n$ for all natural numbers $n$, and the Lemma follows.
\end{proof}
The following Corollary is a partial generalization of  Corollary 2, \cite{26} p. 273:
\begin{corollary} \label{Zar} Let $R$ be a noetherian ring and $\m$ a maximal ideal of
$R$, such that $R_\m$ is an analytically irreducible local domain.  Let
$\bar R$ be a finitely generated $R$-algebra, containing $R$. Let $\p$ be a
prime ideal of $\bar R$, lying over $\m$. Then the $\p$-adic topology on $R$
is linearly equivalent to the $\m$-adic topology. In other words, there
exists $r\in\N$ such that for any $n\in\N$
$$
\p^{rn}\cap R\subset\m^n.
$$
\end{corollary}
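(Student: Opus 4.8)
The plan is to reduce the statement to Lemma \ref{lemma_norm} by a localization-plus-normalization argument. First I would localize: by Lemma \ref{lemma_loc}, the $\m$-adic topology on $R$ coincides with the restriction of the $\m R_\m$-adic topology, and similarly the $\p$-adic topology on $\bar R$ restricts correctly; so after replacing $R$ by $R_\m$ we may assume $(R,\m)$ is analytically irreducible local. Next, since $\bar R$ is a finitely generated $R$-algebra and $\p$ lies over $\m$, I would pass to the localization $\bar R_\p$ (again using Lemma \ref{lemma_loc} to control symbolic versus ordinary powers) so as to reduce to the case where $\bar R_\p$ is local with maximal ideal $\p$. The issue is that $\bar R_\p$ need not be finite over $R$, nor need it be a domain; this is where the main work lies.

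The key step is to produce, inside $\bar R_\p$, a subring $A$ that is \emph{module-finite} over $R$ and has a maximal ideal $\q$ with $\q\cap R=\m$ and $A_\q$ analytically irreducible, so that Lemma \ref{lemma_norm} (condition (1), via multiplicativity of the norm, exactly as in the final line of the proof of Lemma \ref{lemma_norm}) applies to $R\subset A$. Concretely, $\bar R_\p / \p$ is a field finite over $R/\m = \k$ (this uses that $\bar R$ is of finite type over $R$ and a Nullstellensatz-type argument, or one can invoke that $\dim \bar R_\p \le \dim R$ when $\bar R$ is a domain; in general one first kills a minimal prime of $\bar R$ contained in $\p$). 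One then chooses elements of $\bar R_\p$ generating this residue field extension and whose images generate $\bar R_\p$ up to $\m$-adic completion, and takes $A$ to be the $R$-subalgebra they generate, which by the theory of complete local rings (or by a direct Noether-normalization argument applied to $\wdh R$) can be arranged to be finite over $R$. The inclusion $\q^{rn}\cap R \subset \m^n$ then follows from $\m(g) \le \m_0(N_{\mathbb K / \mathbb L}(g)) \cdot (\text{const})$ applied in the direction given by Lemma \ref{lemma_norm}, combined with $\p^{rn}\cap A \subset \q^{rn}$ (since $\q = \p \cap A$) and $\q^{rn}\cap R \subset \m^n$.

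The main obstacle I expect is precisely the construction of the finite subalgebra $A$ with an analytically irreducible localization lying over $\m$: one must simultaneously arrange module-finiteness, the correct residue field, and that no spurious components appear (so that analytic irreducibility of $R_\m$ is inherited by $A_\q$ via \cite{11}, (37.8), as in the proof of the main theorem). A clean way around this is to work in the $\m$-adic completion $\wdh R$ throughout — $\wdh R$ is a complete analytically irreducible local domain, one picks a coefficient field / Cohen subring and a system of parameters, forms the finite extension there, and then descends the topological comparison to $R$ using that $\m^n\wdh R \cap R = \m^n$. Once $A$ is in hand, everything else is a direct application of results already established: Lemma \ref{lemma_norm}, Lemma \ref{lemma_loc}, and the elementary containments between powers of $\p$, $\q$ and $\m$.
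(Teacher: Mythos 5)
The proposal has two genuine gaps; the approach as described does not go through.

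First, the claim that $\bar R_\p/\p\bar R_\p$ is finite over $\k=R/\m$ is false in general. Since $\p$ is only assumed to be a \emph{prime} ideal of $\bar R$ lying over $\m$ (not a maximal one), $\bar R/\p$ is merely a finitely generated $\k$-algebra that is a domain, and its fraction field $\bar R_\p/\p\bar R_\p$ can be a transcendental extension of $\k$. For instance, with $R=\k[[t]]$, $\m=(t)$, $\bar R=R[x]$, $\p=(t)\bar R$, one has $\bar R_\p/\p\bar R_\p=\k(x)$. No Nullstellensatz argument applies, and the dimension bound $\dim\bar R_\p\le\dim R$ says nothing about the residue field. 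Consequently one cannot hope for a module-finite subalgebra $A\subset\bar R_\p$ carrying the topology; a norm $N_{\K'/\K}$ does not even exist in this situation. This is precisely the obstruction the paper removes explicitly by first replacing $R$ by (the normalization of) a maximal purely transcendental extension of $R$ inside $\bar R$ --- a step absent from your proposal.

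Second, even in the algebraic case the appeal to Lemma \ref{lemma_norm} points in the wrong direction. Applied to $R\subset A$ with $\q=\p\cap A$ maximal and $\q\cap R=\m$, the Lemma gives $\m\bigl(N_{\K'/\K}(f)\bigr)\le r\,\q(f)$ for $f\in A$; specializing to $g\in R$, where $N_{\K'/\K}(g)=g^c$ with $c=[\K':\K]$, this yields $c\,\m(g)\le\m(g^c)\le r\,\q(g)$, i.e.\ $\m(g)\le \tfrac{r}{c}\,\q(g)$. That is the \emph{trivial} comparison (already a consequence of $\m A\subset\q$), not the required $\q^{rn}\cap R\subset\m^n$, which amounts to $\q(g)\le r'\m(g)$. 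In the paper's use of Lemma \ref{lemma_norm} inside the proof of Theorem \ref{main_th}, the missing ingredient comes from a genuine valuation $\nu$: one chains $\nu(f)\le\nu_0\bigl(N_{\K/\L}(f)\bigr)\le c\,\m_0\bigl(N_{\K/\L}(f)\bigr)\le cr'\,\m(f)$, using multiplicativity of $\nu$ and linear comparability of $\nu_0$ with $\m_0$ on the regular base. The $\p$-adic order is not a valuation and there is no such chain. That is why the paper's proof of Corollary \ref{Zar} does \emph{not} reduce to Lemma \ref{lemma_norm}: after the preliminary reductions (localization, completion, normalization, killing a minimal prime, and disposing of the transcendental part), it passes to the normalized blow-up of $\bar R$ along $\p$, extracts a divisorial valuation $\nu$ with $\p^l\subset\{x:\nu(x)\ge l\}$, observes that $\nu$ restricts to a divisorial valuation of $\K$ centered in $R$, and invokes Theorem \ref{main_th} (i.e.\ Izumi's Theorem) for that valuation. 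Your proposal omits this construction of an auxiliary valuation, which is the crux of the argument.
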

\begin{proof} By Lemma \ref{lemma_loc}, we may assume that $R$ is an analytically irreducible local noetherian
domain with maximal ideal $\m$. Let $\K$ be the field of fractions of $R$.
Let $\psi:R\rightarrow\wdh R$ denote the $\m$-adic completion of $R$. The homomorphism
$\psi$ is faithfully flat, hence so is the induced map $\bar
R\rightarrow\bar R\otimes_R\wdh R$. Then there exists a prime ideal
$\wdh\p$ in $\bar R\otimes_R\wdh R$ which lies over $\p$. The ring $\bar
R\otimes_R\wdh R$ is finitely generated over $\wdh R$, so we may assume that $R$
is $\m$-adically complete.

If $\bar R$ is a purely transcendental extension of $R$, then $\p^n\cap
R=\m^n$ and the Corollary is trivial.

The normalization $R'$ of $R$ is a complete local domain, finite over
$R$ (\cite{11}, Corollary 37.9). Hence the Corollary is true when $\bar R=R'$.
Replacing $(R,\bar R)$ with $(R',R'\otimes_R\bar R)$, we may assume that $R$
is normal, hence analytically normal.

Since we know the Corollary for the case of purely transcendental
extensions, we may replace $R$ by the normalization of a maximal purely
transcendental extension of $R$, contained in $\bar R$. Thus we may assume
that the total ring of fractions  of $\bar R$ is finite over $\K$.

Let $\q$ be a minimal prime of $\bar R$ such that $\q\subset\p$. Since $R$
is a domain, $\p\cap R=(0)$. Replacing $\bar R$ with $\frac{\bar R}\q$, we
may assume that $\bar R$ is a domain. Let $\bar \K$ denote its field of
fractions. Let $\pi:X\rightarrow\text{Spec}\ \bar R$ be the normalized blowing-up along
$\p$. Since $R$ is Nagata, so is $\bar R$ and $\pi$ is of finite type. Let
$E$ be any irreducible component of $\pi^{-1}(\p)$. Let $\nu$ denote the
divisorial valuation of $\bar \K$ associated to $E$, and let
$$
\p_l:=\{x\in\bar R\mid\nu(x)\geq l\},\qquad l\in\N.
$$
Then $\p^l\subset\p_l$ for all $l\in\N$. Hence it is sufficient to show
that the $\nu$-adic and the $\m$-adic topologies in $R$ are linearly
equivalent. Since $[\bar \K:\K]<\infty$, $\nu$ induces a divisorial valuation of
$\K$, centered in $R$. Now the Corollary follows from Theorem \ref{main_th}.
\end{proof}
The above Corollary can be strengthened as follows.
\begin{corollary}\label{cor_trans}
Let $R$ be a noetherian ring and $\m$ a maximal ideal of $R$ such that $R_\m$
is an analytically irreducible local domain. Let $\bar R$ be a finitely
generated $R$-algebra, containing $R$. Let $\p$ be a prime ideal of $\bar
R$ lying over $\m$ and $I$ an ideal of $\bar R$ such that $I\subset\p$ and
$I\cap R=(0)$. Then there exists $r\in\N$ such that for any $n\in\mathbb
N$,
$$
(I+\p^{rn})\cap R\subset\m^n.
$$
\end{corollary}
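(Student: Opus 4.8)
The plan is to reduce the statement immediately to Corollary \ref{Zar} by passing to the quotient ring $\bar R/I$. Since $I\cap R=(0)$, the composite homomorphism $R\rightarrow\bar R\rightarrow\bar R':=\bar R/I$ is injective, so we may and do regard $R$ as a subring of $\bar R'$. As $\bar R$ is a finitely generated $R$-algebra, so is $\bar R'$. Because $I\subset\p$, the image $\p':=\p/I$ is a prime ideal of $\bar R'$, and its contraction to $R$ equals the contraction of $\p$ to $R$, which is $\m$. Thus the triple $(R,\bar R',\p')$ satisfies all the hypotheses of Corollary \ref{Zar}: $R_\m$ is analytically irreducible, $\bar R'$ is a finitely generated $R$-algebra containing $R$, and $\p'$ is a prime of $\bar R'$ lying over $\m$.

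By Corollary \ref{Zar} there exists $r\in\N$ such that $\p'^{\,rn}\cap R\subset\m^n$ for every $n\in\N$. It then suffices to identify $\p'^{\,rn}\cap R$ with $(I+\p^{rn})\cap R$. Writing $q:\bar R\rightarrow\bar R'$ for the quotient map, we have $\p'^{\,rn}=q(\p^{rn})=(\p^{rn}+I)/I$, and for $x\in R$ the element $q(x)$ lies in $\p'^{\,rn}$ precisely when $x-y\in I$ for some $y\in\p^{rn}$, i.e.\ precisely when $x\in I+\p^{rn}$ inside $\bar R$. Hence $(I+\p^{rn})\cap R=\p'^{\,rn}\cap R\subset\m^n$, which is the desired conclusion.

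The only point requiring any care — the ``main obstacle'', such as it is — is the verification that forming the quotient by $I$ is compatible with the relevant intersections, i.e.\ that the preimage in $R$ of $\p'^{\,rn}$ is exactly $(I+\p^{rn})\cap R$; this is a routine diagram chase using $I\cap R=(0)$ and the surjectivity of $q$. No valuation-theoretic or analytic input beyond Corollary \ref{Zar} (hence ultimately Theorem \ref{main_th} and Izumi's Theorem \ref{Izumi}) is needed.
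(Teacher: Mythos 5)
Your proposal is correct and takes exactly the same route as the paper: the paper's proof consists of the single sentence ``Replacing $\bar R$ by $\bar R/I$ does not change the problem,'' followed by an appeal to Corollary \ref{Zar}, which is precisely what you do, just with the routine verifications spelled out.
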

\begin{proof} Replacing $\bar R$ by $\frac{\bar R}I$ does not change the problem. Now Corollary \ref{cor_trans} follows from Corollary
\ref{Zar}.
\end{proof}
\noindent\begin{rmk} In terms of the analogy with functional analysis, this Corollary
says that if $I\cap R=(0)$, then $I$ is ``transversal'' to $R$.
\end{rmk}
\begin{corollary}\label{cor_fact}Let $R$ be a noetherian ring and $T=(T_1\ddo T_n)$
independent variables. Let $\m$ be a maximal ideal of $R$. Let $\bar R$ be a finitely generated extension of $R$ and $\bar 
p\subset\bar\m\subset\bar R[[T]]$ a pair of prime ideals of $\bar R[[T]]$ such that $\bar\m\cap R[[T]]=(\m,T)$. Let 
\begin{equation}
\p:=\bar p\cap R[[T]]\subset(\m,T).\label{eq:pdanspbar}
\end{equation}
Assume that $\frac{R[[T]]_{(\m,T)}}\p$ is analytically
irreducible. Then there exists $r\in\N$ such that
for all $n\in\N$
$$
(\bar\m^{rn}+\bar\p)\cap R[[T]]\subset(\m,T)^n+\p.
$$
\end{corollary}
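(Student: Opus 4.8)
The plan is to reduce to Corollary \ref{Zar}. Since $\bar p\cap R[[T]]=\p$, dividing by $\bar p$ and $\p$ yields an injection $A:=R[[T]]/\p\hookrightarrow B:=\bar R[[T]]/\bar p$, where $A$ is a Noetherian domain, $\m_A:=(\m,T)/\p$ is a maximal ideal of $A$ with $A_{\m_A}=R[[T]]_{(\m,T)}/\p$ analytically irreducible by hypothesis, and $\m_B:=\bar\m/\bar p$ is a prime of $B$ lying over $\m_A$. Unwinding the quotients, the inclusion to be proved is equivalent to $\m_B^{rn}\cap A\subseteq\m_A^n$ for all $n$, i.e. to the linear equivalence of the $\m_B$-adic and $\m_A$-adic topologies on $A$. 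This is precisely the content of Corollary \ref{Zar} applied to $A\subseteq B$, with the one caveat that $B$ need not be a finitely generated $A$-algebra: writing $\bar R=R[t_1,\ldots,t_m]$, the ring $\bar R[[T]]$ is the $T$-adic completion of the finitely generated $R[[T]]$-algebra $R[[T]][t_1,\ldots,t_m]$, but is strictly larger.

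The bulk of the proof is therefore to dispose of this caveat, by reducing --- via the same d\'evissage as in the proofs of Theorem \ref{main_th} and Corollary \ref{Zar} --- to the case in which $\bar R$ is module-finite over $R$. Once that is achieved, $\bar R[[T]]=\bar R\otimes_R R[[T]]$ is module-finite over $R[[T]]$, hence $B$ is module-finite, in particular finitely generated, over $A$, and Corollary \ref{Zar} applies directly. The reduction is performed on the pair $R\subseteq\bar R$, where $\bar R$ \emph{is} finitely generated over $R$: by Lemma \ref{lemma_loc} we localize; passing to the $\m$-adic completion as in Corollary \ref{Zar}, we may assume $R$ is complete, so that $R$ is excellent and $R[[T]]$, being itself a complete local ring, is excellent and hence Nagata; standard reductions let us assume $R$ and $\bar R$ are domains; and finally, exactly as in Corollary \ref{Zar}, we peel off a maximal purely transcendental subextension $R\subseteq R[u_1,\ldots,u_s]\subseteq\bar R$ (a finite transcendence basis, since $\bar R$ is finitely generated over $R$) and replace $R$ by the normalization of a suitable localization of $R[u_1,\ldots,u_s]$; over this normalization $\bar R$ becomes module-finite, while the purely transcendental part contributes nothing, because $\mathfrak P^n\cap A'=\m'^n$ for every prime $\mathfrak P$ of a polynomial ring $A'[u_1,\ldots,u_s]$ lying over a maximal ideal $\m'$ of $A'$.

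The step I expect to be the main obstacle is checking that analytic irreducibility is preserved through this d\'evissage --- concretely, that after adjoining the variables $u_i$, localizing, and normalizing, the resulting power-series ring modulo the contraction of $\bar p$ is again analytically irreducible. This is exactly where reducing to $R$ complete at the outset is essential: it makes $R[[T]]$, and hence $R[[T]][u_1,\ldots,u_s]$ and all its localizations and quotients, excellent, so that normalizing (a finite operation, by the Nagata property) followed by localizing yields a normal excellent local domain, which is analytically normal and a fortiori analytically irreducible; one checks separately that the $T$-adic completion relating $R[[T]][u_1,\ldots,u_s]$ to $R[u_1,\ldots,u_s][[T]]$ does not affect the relevant $\m$-adic completions, which is clear because the maximal ideal in question contains $(T)$. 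Granting the reduction to $\bar R$ module-finite over $R$, the estimate $\m_B^{rn}\cap A\subseteq\m_A^n$, and with it the Corollary, follows at once from Corollary \ref{Zar}.
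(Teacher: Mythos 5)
Your overall plan---quotient by $\bar p$ and $\p$ and reduce to Corollary~\ref{Zar}---is the right one, and you correctly identify the obstacle: $\bar R[[T]]$ is generally strictly larger than the finitely generated $R[[T]]$-algebra $R[[T]][t_1,\dots,t_m]$. But the way you propose to dispose of it contains a genuine gap. The d\'evissage you describe is meant to reduce to the case in which $\bar R$ is module-finite over $R$, by ``peeling off'' a maximal purely transcendental subextension $R\subseteq R[u_1,\dots,u_s]\subseteq\bar R$ and then normalizing and localizing. However, being finitely generated and algebraic over a Noetherian domain does \emph{not} imply module-finiteness, even after normalizing and localizing---the standard counterexamples are blow-up charts such as $R[u_1,u_2][u_2/u_1]$ over $R[u_1,u_2]$, which has the same fraction field as $R[u_1,u_2]$ yet is not module-finite over it, nor over $R[u_1,u_2]_{(u_1,u_2,\m)}$ nor over any normalization of a localization that does not invert $u_1$. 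Since $\bar\m$ is only required to be a prime lying over $(\m,T)$, such ``non-finite'' primes very much occur, and your reduction step ``over this normalization $\bar R$ becomes module-finite'' fails. (The d\'evissage in Corollary~\ref{Zar} itself does \emph{not} reduce to module-finiteness of $\bar R$: it reduces only to finiteness of the fraction-field extension and then invokes a normalized blow-up and divisorial valuations.)

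The paper sidesteps this entirely by not modifying $\bar R$ at all. It interpolates the ring $S:=R[[T]]_{(\m,T)}\otimes_R\bar R_{\bar\m\cap\bar R}$, which is a \emph{localization of a finitely generated $R[[T]]$-algebra}, sitting between $R[[T]]$ and $\tilde R:=\bar R[[T]]_{\bar\m}$. With $\m_0:=\bar\m\tilde R\cap S$, the ring $\tilde R$ is a localization of the $(T)$-adic completion of $S$, so $S_{\m_0}\to\tilde R$ is faithfully flat; hence $\bar\m^n\tilde R\cap S_{\m_0}=\m_0^n S_{\m_0}$, and by Lemma~\ref{lemma_loc} the $\bar\m$-adic and $\m_0$-adic topologies on $S$ agree. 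One then applies Corollary~\ref{Zar} directly to $\frac{R[[T]]}{\p}\hookrightarrow\frac{S}{\bar\p\cap S}$, which is a (localization of a) finitely generated extension. Your remark that $\bar R[[T]]$ is the $T$-adic completion of $R[[T]][t_1,\dots,t_m]$ was the right clue: the needed argument is the faithful flatness of that completion, not a further d\'evissage of $\bar R$.
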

\noindent\begin{rmk} In particular, we can apply this Corollary to the following situation. Let $R$, $T$, $\m$, $\bar R$, $\bar \m$ be as in Corollary \ref{cor_fact}. Assume, in addition, that $R$ is a UFD. Let $\p=(F)$ be a principal prime ideal generated by a single irreducible power series $F\in R[[T]]$. Let
\begin{equation}\label{3.1}
\p\bar R[[T]]=\q_1\cap\cdots\cap\q_s
\end{equation}

be a primary decomposition of $\p$ in $\bar R[[T]]$. We must have $\sqrt{\q_i}\subset\bar\m$ for some $i$. Let
$\bar\p:=\sqrt{\q_i}$. Finally, assume that the equality (\ref{eq:pdanspbar}) is satisfied.
The equality \eqref{3.1}
corresponds to a factorization $F$ in $\bar R[[T]]$. Say, $F=F_1F_2$ in
$\bar R[[T]]$. Then Corollary \ref{cor_fact} says that there exists $r\in\N$ such
that if
$$
\tilde F\equiv F_1\tilde F_2\quad\md\ \bar\m^{rn},
$$
where $\tilde F_2\in\bar R[[T]]$, $\tilde F\in R[[T]]$, then
$$
\tilde  F\cong Fg\quad\md\ (\m,T)^n
$$
for some $g\in R[[T]]$. Thus $F_1(F_2g-\tilde F_2)\in (\m,T)^n\subset \tilde \m^n$ and by Artin-Rees Lemma there exists a constant $c$ depending only on $F_1$ such that $\tilde F_2\cong F_2g$ $\md$
 $\bar \m^{n-c}$. In other words, approximate factorization of an
element of $R[[T]]$ in $\bar R[[T]]$ is close to an actual factorization,
and the estimate is linear in $n$.\end{rmk}

\begin{proof}[Proof of  Corollary \ref{cor_fact}] Let $\tilde R:=\bar R[[T]]_{\bar \m}$ and let
$$
S=R[[T]]_{(\m,T)}\otimes_R\bar R_{\bar \m\cap\bar R}\subset\tilde R.
$$
Let $\m_0:=\bar\m\tilde R\cap S$. Then $\tilde R$ is the $(T)$-adic
completion of $S$, followed by localization at $\bar \m$. We can decompose the injective homomorphism
$S\rightarrow\tilde R$ in two steps: $S\rightarrow
S_{\m_0}\rightarrow\tilde R$, where the second arrow is a faithfully flat
homomorphism and the first a localization with respect to a maximal ideal. By faithful flatness, we have $\bar \m^n\cap S_{\m_0}=\m_0^n\tilde R\cap S_{\m_0}=\m_0^nS_{\m_0}$, so that the $\bar \m$-adic topology on $S_{\m_0}$ is linearly equivalent to the $\m_0$-adic topology. Combining this with Lemma \ref{lemma_loc}, we see that the $\bar \m$-adic topology on $S$ is linearly equivalent to the $\m_0$-adic topology. The ring $S$ is a localization of a finitely generated $R[[T]]$-algebra, and we
apply Corollary \ref{Zar} to the ring extension
$\frac{R[[T]]}\p\hookrightarrow\frac S{\bar\p\cap S}$. This completes the
proof.
\end{proof}

\noindent\begin{rmk} Corollary \ref{cor_fact} can be strengthened as follows. Let $R$ be
a noetherian ring, $T_1\ddo T_n$ independent variables and $A$ a noetherian
ring such that
$$
R[T]\subset A\subset R[[T]].
$$
Let $\m$ be a maximal ideal of $R$. Let $\bar R$ be
a finitely generated $R$-algebra and let $B$ be a noetherian $\bar
R$-algebra such that
$$
A\otimes_R\bar R\subset B\subset\bar R[[T]].
$$
Assume that $(\m,T)$ is a maximal ideal of $A$ and that both
$A\otimes_R\bar R$ and $B$ have $\bar R[[T]]$ as their
$(T)$-adic completion. Let $\bar\m$ be any prime ideal of $\bar R$ lying over
$(\m,T)$ and $\bar\p$ a prime ideal of $B$ such that $\bar\p\subset\bar\m$.

Let $\p:=\bar\p\cap A$. Assume that $\frac{A_{\bar\m\cap A}}{\p A_{\bar\m\cap A}}$ is analytically irreducible. Then there exists 
$r\in\N$ such that for any $n\in\N$
$$
(\bar \m^{rn}+\bar\p)\cap A\subset(\m,T)^n+\p.
$$
The proof is exactly the same as for Corollary \ref{cor_fact}.
\end{rmk}
\vskip0.1in\noindent

The following result is not a corollary of Theorem \ref{main_th} but uses Theorem \ref{Rees_val}:
\begin{prop}(cf. \cite{11} Theorem 3.12, p. 11). Let $R$ be a
noetherian domain and $I$ an ideal of $R$. Let $x$ be a non-zero element of
$R$. Assume that $R$ is analytically unramified. Then there exists $r\in\N$ such that
for any $k,n\in\N$
$$
I^k:x^n\subset I^{k-rn}.
$$
Here we adopt the convention that $I^n=R$ if $n\le0$.
\end{prop}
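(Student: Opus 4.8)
The plan is to derive the Proposition from the two theorems of Rees recalled above --- the existence of the Rees valuations (Theorem~\ref{Rees_val}) and the comparison $\bar I(f)\le I(f)+C$ valid in analytically unramified rings (Theorem~\ref{Rees_th}). First one disposes of the trivial cases: if $I=(0)$, or $I=R$, or $n=0$ (in which case $I^k:x^0=I^k=I^{k-r\cdot 0}$), there is nothing to prove, so assume henceforth that $I$ is a nonzero proper ideal and that $n\ge 1$. By Theorem~\ref{Rees_val}, $I$ has a finite set $\nu_1\ddo\nu_q$ of Rees valuations, each $\Z$-valued with $\nu_i(I)>0$; since $x\ne 0$, the number
\[
s:=\max_{1\le i\le q}\frac{\nu_i(x)}{\nu_i(I)}
\]
is a well-defined non-negative rational number. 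By Theorem~\ref{Rees_th}, which applies since $R$ is analytically unramified, there is a constant $C>0$ with $\bar I(f)\le I(f)+C$ for every $f\in R$. I would then fix once and for all an integer $r\ge s+C$; this $r$ depends on $R$, $I$ and $x$ but not on $k$ or $n$, which is what the statement requires.

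Next, take an arbitrary $y\in I^k:x^n$. Then $x^ny\in I^k$, hence $\bar I(x^ny)\ge I(x^ny)\ge k$. Expanding $\bar I(x^ny)$ by Theorem~\ref{Rees_val} and using that each $\nu_i$ is a genuine valuation, so that $\nu_i(x^ny)=n\,\nu_i(x)+\nu_i(y)$, one obtains $n\,\nu_i(x)+\nu_i(y)\ge k\,\nu_i(I)$ for each $i$; dividing by $\nu_i(I)$ and taking the minimum over $i$ yields
\[
\bar I(y)=\min_{1\le i\le q}\frac{\nu_i(y)}{\nu_i(I)}\ \ge\ k-ns .
\]
Feeding this back into Theorem~\ref{Rees_th} and using $n\ge 1$ to replace $C$ by the larger quantity $Cn$,
\[
I(y)\ \ge\ \bar I(y)-C\ \ge\ k-ns-C\ \ge\ k-n(s+C)\ \ge\ k-rn
\]
(and if $\bar I(y)=\infty$, the inequality $\bar I(y)\le I(y)+C$ forces $I(y)=\infty$, so the conclusion holds a fortiori). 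Therefore $y\in I^{k-rn}$, with the convention $I^m=R$ for $m\le 0$, and since $y$, $k$, $n$ were arbitrary the Proposition follows.

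I expect the only genuinely non-formal point to be the transition from the \emph{additive} error term $C$ supplied by Rees's theorem to a conclusion that is required to be \emph{linear} in $n$: this is exactly why the case $n=0$ is peeled off and why, for $n\ge 1$, one trades $C$ for $Cn$. A secondary, purely bookkeeping remark is that Theorem~\ref{Rees_th} is phrased for local rings whereas here $R$ need only be an analytically unramified noetherian domain; this is harmless, since the estimate $\bar I(f)\le I(f)+C$ holds, with a single global constant, for any such ring. Beyond these I foresee no difficulty.
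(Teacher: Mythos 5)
Your proof is correct and follows essentially the same route as the paper's: express $\bar I$ via the Rees valuations (Theorem~\ref{Rees_val}), convert $x^ny\in I^k$ into the linear inequalities $n\nu_i(x)+\nu_i(y)\ge k\nu_i(I)$, and then use Theorem~\ref{Rees_th} to pass from a lower bound on $\bar I(y)$ to a lower bound on $I(y)$, absorbing the additive constant into the linear term via $n\ge 1$. You are a bit more explicit than the paper in peeling off $n=0$ and in noting that the additive constant must be traded for a multiple of $n$, but these are only expository differences.
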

\begin{proof} By Theorem \ref{Rees_val}, there exist valuations $\nu_1\ddo\nu_s$ such that for any $f\in R$
$$
\bar I(f)=\min_{1\leq i\leq s}\frac{\nu_i(f)}{\nu_i(I)}.
$$
By Theorem \ref{Rees_th} there exists $r_1\in\N$ such that for any $n\in\N$ and any $f\in R$
\begin{equation}\label{3.2}
\bar I(f)\leq r_1+I(f).\end{equation}

Hence, for any $l\in\N$, if $\bar I(f)\geq r_1+l$ then $f\in I^l$. For any $y\in I^k:x^n$ we must have
$$
\nu_i(y)+n\nu_i(x)\geq k\nu_i(I)\qquad\text{for all }i,\ 1\leq i\leq s.
$$
Now take a positive integer $R$  such that
$$
r\ge\max_{1\leq i\leq s}\frac{\nu_i(x)}{\nu_i(I)}+r_1.
$$
Then for any $y\in I^k:x^n$ and any $i\in\{1\ddo s\}$ we have
$$
\nu_i(y)\geq k\nu_i(I)-n\nu_i(x)\geq k\nu_i(I)-n(r-r_1)\nu_i(I)\geq(k-nr+r_1)\nu_i(I).
$$
By (\ref{3.2}) this implies that $y\in I^{k-nr}$, as desired.
\end{proof}

The following corollary is a generalization of the main result of \cite{M}:

\begin{corollary} Let $R$ be an analytically irreducible noetherian local ring. Then there exists $a\in \N$ such that for any  proper ideal $I$ of $R$  we have:
$$
I^{(ac)}\subset \m^c \ \ \forall c\in \N.
$$
Here, if $W$ denotes the complement of the union of the associated primes of $I$, $I^{(n)}$ is the contraction of $I^nR_W$ to $R$ where $R_W$ denotes the localization of R with respect to the multiplicative system $W$. The ideal $I^{(n)}$ is called the $n$-th symbolic power of $I$.
\end{corollary}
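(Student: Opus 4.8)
The plan is to transfer the estimate, by means of norms, to a complete regular local ring, where the analogous uniform statement holds by the main result of \cite{M}, while arranging that every loss incurred in the transfer is bounded by invariants of a single finite ring extension; the constant $a$ then depends only on $R$.

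We begin with reductions. If $\m\in\operatorname{Ass}(R/I)$, the multiplicative set $W$ consists of units, so $I^{(n)}=I^{n}$ and $I^{(ac)}=I^{ac}\subset\m^{ac}\subset\m^{c}$ for any $a\ge1$; thus we may assume $\m\notin\operatorname{Ass}(R/I)$ and fix an associated prime $P$ of $R/I$, necessarily with $P\subsetneq\m$. For $x\in I^{(n)}$ there is $u\in R\setminus\bigcup_{Q\in\operatorname{Ass}(R/I)}Q$, in particular $u\notin P$, with $ux\in I^{n}\subset P^{n}$; hence $x\in P^{n}R_{P}\cap R=P^{(n)}$, so $I^{(ac)}\subset P^{(ac)}$. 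It therefore suffices to produce $a$, depending only on $R$, with $P^{(ac)}\subset\m^{c}$ for every prime $P\subsetneq\m$ and every $c\in\N$. Replacing $R$ by its completion $\wdh R$ (a domain, as $R$ is analytically irreducible), $P$ by a prime $\mathfrak P$ of $\wdh R$ with $\mathfrak P\cap R=P$ --- so $\mathfrak P\subsetneq\wdh\m$, and the inclusion $x\in P^{(ac)}$ in $R$ forces $x\in\mathfrak P^{(ac)}$ in $\wdh R$ by the same computation as above --- and using $\wdh\m^{c}\cap R=\m^{c}$, we may assume that $R$ is a complete local domain.

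By Cohen's structure theorem $R$ is module-finite over a complete regular local subring $T$; set $\L:=\Frac(T)\subset\K:=\Frac(R)$, so $[\K:\L]<\infty$, let $\m_{0}$ be the maximal ideal of $T$, let $N$ be a number of generators of $R$ as a $T$-module, and put $\mathfrak q:=P\cap T$, a prime of $T$ with $\mathfrak q\subsetneq\m_{0}$ because $P\ne\m$ and $R$ is finite over $T$. The key uniform estimate is
$$(PR_{P})^{N}\ \subset\ \mathfrak qR_{P}.$$
Indeed $R_{P}/\mathfrak qR_{P}$ is an Artinian local ring, finite of dimension $\le N$ over the subfield $\kappa(\mathfrak q)$ (since $R$ is generated by $N$ elements over $T$), so its maximal ideal $PR_{P}/\mathfrak qR_{P}$ has vanishing $N$-th power. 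Consequently, if $x\in P^{(ac)}$, i.e. $x\in(PR_{P})^{ac}$, then $x\in\mathfrak q^{m}R_{P}$ with $m:=\lfloor ac/N\rfloor$. Choose a valuation $\nu$ of $\K$, centered at $P$ in $R$, whose restriction to $\L$ is the order valuation $\ord_{\mathfrak qT_{\mathfrak q}}$ of $\L$; such a $\nu$ exists by the standard extension theory of valuations, obtained by lifting $\ord_{\mathfrak qT_{\mathfrak q}}$ through the integral closure of $T$ in $\K$ at a prime lying over $P$. Since $\nu$ is non-negative on $R_{P}$ and $\nu(\mathfrak q)=1$, we get $\nu(x)\ge m$; and, writing $N_{\K/\L}(x)$ as the product of the $\L$-conjugates of $x$ (with an inseparability exponent when $\cha\K>0$), each conjugate having non-negative value under every extension of $\nu$ to an algebraic closure of $\L$ --- because $R$ is integral over $T$ --- while the conjugate corresponding to the given embedding $\K\subset\overline{\L}$ has value $\nu(x)$, we obtain $\ord_{\mathfrak q}(N_{\K/\L}(x))\ge\nu(x)\ge m$. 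As $T_{\mathfrak q}$ is regular this says precisely that $N_{\K/\L}(x)\in\mathfrak q^{(m)}$ in $T$.

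By the main result of \cite{M}, applied to the regular ring $T$, there is a constant $b$, depending only on $T$, with $\mathfrak q^{(bk)}\subset\m_{0}^{k}$ for every prime $\mathfrak q$ of $T$ and every $k$, so $N_{\K/\L}(x)\in\m_{0}^{\lfloor m/b\rfloor}$. Finally, since $T$ is normal, case (1) of Lemma \ref{lemma_norm} applies with $S=T$ and produces a constant $r$ with $\m_{0}(N_{\K/\L}(f))\le r\,\m(f)$ for all $f\in R$; hence
$$\m(x)\ \ge\ \tfrac{1}{r}\,\m_{0}(N_{\K/\L}(x))\ \ge\ \tfrac{1}{r}\bigl\lfloor m/b\bigr\rfloor\ \ge\ \tfrac{1}{r}\bigl\lfloor b^{-1}\lfloor ac/N\rfloor\bigr\rfloor,$$
and the right-hand side is $\ge c$ once $a$ is chosen large enough in terms of $N$, $b$ and $r$ (the nested floors cost only additive constants, which are absorbed into $a$; the smallest values of $c$ present no problem since $P^{(a)}\subset P\subset\m$). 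As $N$, $b$ and $r$ depend only on $R$ and the fixed $T$, this $a$ works for every proper ideal $I$, proving the Corollary. The delicate point throughout is the uniformity in $P$: the prime $P$ runs over all of $\operatorname{Spec}R$, so no loss above may depend on $P$, and the one step that looks non-uniform --- passing from a symbolic power of $P$ in $R$ to one of $\mathfrak q=P\cap T$ in $T$ --- is exactly what the estimate $(PR_{P})^{N}\subset\mathfrak qR_{P}$ is designed to control.
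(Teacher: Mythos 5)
Your proof is correct, and it is a genuinely different route from the paper's on the key intermediate step. Both proofs make the same opening reductions (to a complete local domain and a prime ideal $P$, then to $R$ module-finite over a complete regular $T$ via Cohen's theorem), and both close with Lemma~\ref{lemma_norm} applied to $\m_0(N_{\K/\L}(\cdot))$ versus $\m(\cdot)$. The difference lies in how one passes from symbolic powers of $P$ in $R$ to symbolic powers of $\mathfrak q=P\cap T$ in $T$. The paper first reduces to the case $\K/\L$ Galois and $R$ normal (via the tower $\L\subset\K_s\subset\K$, Frobenius for the inseparable part, and a passage to a normal closure), and in that case invokes a proposition of Hochster (Proposition~3.10 of \cite{4}) to get $x\in P^{(c)}\Rightarrow N_{\K/\L}(x)^l\in\mathfrak q^{(c)}$ with $l$ uniform. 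You avoid the Galois/normal reduction entirely: you use the elementary and uniform Artinian bound $(PR_P)^N\subset\mathfrak q R_P$ (with $N$ the number of module generators of $R$ over $T$, hence independent of $P$), pass to an extension $\nu$ of the order valuation $\ord_{\mathfrak q T_{\mathfrak q}}$ centered at $P$, and conclude by the conjugate computation that $\ord_{\mathfrak q}(N_{\K/\L}(x))\ge\lfloor ac/N\rfloor$. This trades Hochster's finer proposition, which requires the Galois hypothesis, for a cruder but more robust length estimate that works for any finite extension; the price is paid in the size of the constant and is harmless for a linear bound. Two cosmetic remarks: one can take $b=1$ in your step $\mathfrak q^{(bk)}\subset\m_0^k$, since for regular local $T$ the containment $\mathfrak q^{(k)}\subset\m_0^k$ is the classical result the paper cites from \cite{4} (invoking \cite{M} is overkill but not wrong); and the existence of the extension $\nu$ with prescribed center $P$ is indeed standard (e.g.\ via a normal closure, where the Galois group acts transitively on both the extensions of $\nu_0$ and the primes over $\mathfrak q$), but it would be worth a citation since it is the one step of your argument that could be questioned.
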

Let us mention that it is known that if $(R,\m)$ is a regular local ring of dimension $d$ then $I^{(dc)}\subset I^c$ for any ideal $I$ of $R$  and any integer $c$ \cite{5}. If $(R,\m)$ has is an isolated singularity ring then $I^{(kc)}\subset I^c$ for any ideal $I$ of $R$   and any integer $c$ \cite{6} for some constant $k$ independent on $\p$. For a general local ring $R$  and for any ideal $I$, there exists a constant $k$ depending on $I$ such that $I^{(kc)}\subset I^c$ for any $c$  \cite{Sw} but it is still an open question to know if  such a $k$ may be chosen independently of $I$ in general.
\begin{proof}
First let us prove the result when $R$  is a complete local domain and $I=\p$ is a prime ideal.
By Cohen's structure theorem $R$  is finite over a ring of power series over a field or over a compete Dedekind domain $S$. We denote by $\m_0$ the maximal ideal of $S$. Let $\K$ (resp. $\L$ ) denote the field of fractions of $R$  (resp. $S$).\\
\\
 First let us assume that $\K/\L$ is Galois and $R$  is normal.
Let $\q:=\p\cap S$. Since $\K /\L$ is Galois and $R$  is the integral closure of $S$ in $\K$, there exists an integer $l$ which is independent of $\p$ such that for any integer $N$, $x\in \p^{(N)}$ implies $N_{\K/\L}(x)^l\in \q^{(N)}$ (see Proposition 3.10 of 
\cite{4}). Since $S$ is a regular local ring  we have $\q^{(k)}\subset \m_0^k$ for any $k$ (see \cite{4} p. 9). By Lemma 
\ref{lemma_norm} there exists an integer $r\in\N$ such that for any $f\in R$, 
$\m_0(N_{\K/\L}(f))\leq r\m(f)$. Thus if $x\in\p^{(rkl)}$ then $N_{\K/\L}(x)^l\in \m_0^{rkl}$, hence $N_{\K/\L}(x)\in \m_0^{rk}$ and we have $x\in \m^{ k}$. Finally we obtain
$$
\p^{(rl c)}\subset \m^c\ \ \ \forall \p\subset R \text{ prime  and }c\in\N.
$$
\\
Next, keep the assumptions that $R$ is complete and $\p$ is prime, but drop the assumptions that $R$ is normal and that the extension
$\L\rightarrow\K$ is Galois. Let $p=\ch\  \K$ if $\ch\ \K >0$ and $p=1$ otherwise. Let $p^n$ be the inseparability degree of $\K$ over $\L$. Let $\K_s$ be the maximal separable extension of $\L$ in $\K$ and set $R_s:=R\cap \K_s$. Then $R_s$ is a complete local domain whose maximal ideal $\m_s$ equals $\m\cap R_s$ and $R^{p^n}\subset R_s$. The ideal $\p_s:=\p\cap R_s$ is a prime ideal of $R_s$ and 
$\p^{p^n}\subset \p_s$. For any element $y\in\p^{(c)}$ there exists $a\in R\backslash \p$ such that $ay\in \p^c$. Thus 
$a^{p^n}y^{p^n}\in \p_s^c$ and $y^{p^n}\in{\p_s^{(c)}}$. 

If $\p_s^{(\alpha c)}\subset \m_s^c$ for any integer $c$, then for any
$x\in\p^{(\alpha p^n c)}$ we have $x^{p^n}\in \p_s^{(\alpha p^n c)}\subset \m_s^{p^n c}$. Thus $x^{p^n}\in\m^{p^nc}$ and by Rees theorem there exists a constant $c_0$ depending only on $R$  such that $x\in\m^{c-c_0}$. Thus we may assume that $\K/\L$ is separable.\\
In this case let us denote by $\K_1$ a finite separable field extension of $\K$ which is normal over $\L$  and let $R_1$ be the integral closure of $R$  in $\K_1$. Then $R_1$ is a direct sum of complete local rings and since $R_1$ is a  domain (it is a subring of a field) it is a complete local domain. Let $\m_1$ be the maximal ideal of $R_1$. By Lemma 2.4 \cite{R} there exists $\alpha\in \N$ such that $\m_1^{\alpha c}\cap R\subset \m^c$ for any integer $c$. Since $R\rightarrow R_1$ is finite there exists a prime ideal $\p_1$ of $R_1$ lying over $\p$. Thus by  replacing $R$  and $\p$ by $R_1$ and $\p_1$, we may assume that $\K/\L$ is Galois and $R$  is normal and this case has been proved above.\\
\\
Now let us assume that $R$ is an analytically irreducible local ring and $I=\p$ is a prime ideal of $R$. Let us consider an irredundant primary decomposition of $\p\wdh{R}$:
$$\p\wdh{R}=\q_1\cap\cdots\cap \q_s$$
where $\wdh{R}$ denotes the completion of $R$ and the $\q_i$ are primary ideals of $\wdh{R}$. Let $\p_i$ be the radical of $\q_i$ for all $i$ and set $W:=\wdh{R}\backslash \cup_i\p_i$. Since $\p_i\cap R=\p$ for any $i$, we have an inclusion of multiplicative systems:
$R\backslash\p\subset W$. Thus we have for any integer $n$:

$$
\p^{(n)}=\p^nR_{R\backslash\p}\cap R\subset\p^n\wdh{R}_{R\backslash\p}\cap 
R\subset\p^n\wdh{R}_W\cap\wdh{R}\subset\p_1^{(n)}\wdh{R}_{\wdh{R}\backslash \p_1}\cap\wdh{R}=\p_1^{(n)}.
$$
By the previous case there exist $a$ and $b$ such that $\p_1^{(ac+b)}\subset\wdh{\m}^c$ for all integers $c$. Since $\wdh{\m}^c\cap 
R=\m^c$ the theorem is proved in this case.\\
\\
Finally let us assume that $R$ is an analytically irreducible local ring and $I$ is any ideal of $R$, not necessarily prime. Let 
$\p_1$,..., $\p_s$ be the associated primes of $I$ and set $W=R\backslash\cup_ip_i$. Then we have for any integer $n$:
$$I^{(n)}=I^nR_W\cap R\subset \p_1^nR_W\cap R\subset \p_1^nR_{R\backslash\p_1}\cap R=\p_1^{(n)}.$$
Since the theorem is proved for the symbolic powers of $\p_1$, this proves the theorem for any ideal $I$.\\

\end{proof}


\end{document}